\DeclareMathOperator{\Z}{\mathbb{Z}}
\DeclareMathOperator{\R}{\mathbb{R}}
\DeclareMathOperator{\N}{\mathbb{N}}
\DeclareMathOperator{\Q}{\mathbb{Q}}
\DeclareMathOperator{\Hyp}{\mathbb{H}}
\newcommand{\sm}[1]{\colon #1\rightarrow #1}
\newcommand{\hb}{\partial_\infty\Hyp^2}
\newcommand{\lfol}{\mathcal{W}^{u/s}}
\newcommand{\hty}[2]{#1\colon #2\times I\rightarrow #2}
\theoremstyle{plain}
\newtheorem{theorem}{\bf Theorem}[section]
\newtheorem{prop}[theorem]{\bf{Proposition}}
\newtheorem{lemma}[theorem]{\bf{Lemma}}
\newtheorem{corollary}[theorem]{\bf{Corollary}}
\newtheorem{thmx}{Theorem}
\newtheorem{corx}[thmx]{Corollary}
\theoremstyle{definition}
\newtheorem{mydef}[theorem]{\bf Definition}
\title{Periodic points of endperiodic maps}
\author{Ellis Buckminster}
\date{}
\begin{document}
\begin{abstract} Let $g\sm{L}$ be an atoroidal, endperiodic map on an infinite type surface $L$ with no boundary and finitely many ends, each of which is accumulated by genus. By work of Landry, Minsky, and Taylor in \cite{endperiodic}, $g$ is isotopic to a \emph{spun pseudo-Anosov} map $f$. We show that spun pseudo-Anosov maps minimize the number of periodic points of period $n$ for sufficiently high $n$ over all maps in their homotopy class, strengthening Theorem 6.1 of \cite{endperiodic}. We also show that the same theorem holds for atoroidal Handel--Miller maps when one only considers periodic points that lie in the intersection of the stable and unstable laminations. Furthermore, we show via example that spun-pseudo Anosov and Handel--Miller maps do not always minimize the number of periodic points of low period.
    
\end{abstract}
%reformatted abstract
% Let $g\colon L\rightarrow L$ be an atoroidal, endperiodic map on an infinite type surface $L$ with no boundary and finitely many ends, each of which is accumulated by genus. By work of Landry, Minsky, and Taylor, $g$ is isotopic to a spun pseudo-Anosov map $f$. We show that spun pseudo-Anosov maps minimize the number of periodic points of period $n$ for sufficiently high $n$ over all maps in their homotopy class, strengthening a theorem of Landry, Minsky, and Taylor. We also show that the same theorem holds for atoroidal Handel--Miller maps when one only considers periodic points that lie in the intersection of the stable and unstable laminations.

\maketitle
\section{Introduction}\label{sec:introduction}

Let $g\sm{L}$ be an endperiodic map on an infinite type surface $L$ with no boundary and finitely many ends, each of which is accumulated by genus. 
In \cite{endperiodic}, Landry, Minsky, and Taylor define spun pseudo-Anosov maps on such surfaces $L$ which by definition preserve a pair of transverse singular foliations, making them a type of generalization of pseudo-Anosov maps to the infinite type setting (see Definition \ref{def:spa}). They show that such a map $g$ is isotopic to a spun pseudo-Anosov map $f$ if and only if $g$ is atoroidal, i.e. it doesn't fix an essential multicurve up to isotopy (Theorem A of \cite{endperiodic}). Thus, spun pseudo-Anosov maps provide a type of normal form for and offer a way of studying such endperiodic maps.

Spun pseudo-Anosov (spA) maps share certain dynamical features with pseudo-Anosovs on finite type surfaces. Thurston showed that for every $n$, pseudo-Anosov homeomorphisms minimize the number of periodic points of period $n$ over all maps in their homotopy class (\cite{thurston1988geometry}). Theorem 6.1 of \cite{endperiodic} states that spA maps minimize points of period $n$ among all homotopic \emph{endperiodic} maps for all $n$. Our main theorem is that for large enough $n$, this holds for all maps homotopic to a spA map, not just the endperiodic ones.

\begin{thmx}
     Let $f\sm{L}$ be a spun pseudo-Anosov map. For some $N\in \N$, every homeomorphism $g$ homotopic to $f$ has no fewer points of period $n$ than $f$ for all $n> N$.
\end{thmx}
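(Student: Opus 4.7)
The plan is to adapt Thurston's Nielsen-theoretic proof for finite-type pseudo-Anosov maps \cite{thurston1988geometry} to the spun pseudo-Anosov setting. Thurston's argument rests on three facts about a pA map $f$ on a closed surface: (i) distinct fixed points of $f^n$ lie in distinct Nielsen classes of $f^n$; (ii) each such Nielsen class has nonzero fixed-point index; and (iii) essential (nonzero-index) Nielsen classes are invariants of the homotopy class, so every map homotopic to $f$ must contain at least one fixed point in each. If these three facts can be established for $f^n$ on the infinite-type surface $L$, the bound $|\Fix(g^n)| \geq |\Fix(f^n)|$ follows.

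Facts (i) and (ii) should transfer with only minor change, because the local structure of a spA map near any periodic point is indistinguishable from that of a pseudo-Anosov: a pair of transverse invariant singular foliations with $k$-prong singularities, uniformly expanded and contracted by the dilatation $\lambda$. The standard Nielsen argument --- a path connecting distinct period-$n$ points whose $f^n$-image is homotopic to itself rel endpoints contradicts the expansion of the transverse measure by $f^n$ --- is local in nature and applies directly. Likewise, the local fixed-point index is given by the usual pA formula ($+1$ at a regular fixed point, $1-k$ at a $k$-prong singular fixed point) and is nonzero, so every Nielsen class of $f^n$ containing a periodic point is essential.

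The delicate ingredient is (iii), because on the infinite-type surface $L$ a homotopy from $f$ to $g$ need not be proper, so \emph{a priori} the fixed points of $g^n$ in a given Nielsen class might all escape to the ends of $L$. I would address this by working in a compactification $\bar L$ of $L$ (capping off the ends as in the framework of \cite{endperiodic}) in which the spA map extends to a genuine pseudo-Anosov homeomorphism of a compact surface with boundary, so that Thurston's theorem applies on $\bar L$ directly. The remaining task is to match period-$n$ points of $f$ and $g$ across the compactification in a way compatible with the homotopy; this is where the threshold $n > N$ must enter, since the low-period examples promised in the abstract show that some period-$n$ orbits \emph{can} be eliminated by homotopy when $n$ is small.

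The main obstacle is exactly this matching near the ends. For $n > N$, the uniform $\lambda^n$-expansion of $f^n$ along the unstable foliation $\mathcal{F}^u$ should force any period-$n$ orbit of $g$ into a compact region of $L$ determined by $f$, so that it can be paired with a period-$n$ orbit of $f$ via a homotopy-compatible Nielsen correspondence on $\bar L$. Making this displacement estimate precise --- showing that for large $n$ no orbit can be pushed to infinity by the homotopy despite the lack of properness, and that no spurious fixed points created on the added boundary of $\bar L$ spoil the bookkeeping --- is the primary technical hurdle, and likely requires a Birkhoff-style trapping argument using the invariant foliations together with uniform expansion.
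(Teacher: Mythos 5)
Your high-level framing in terms of Nielsen theory is aligned with the paper, and you correctly identify the crux of the difficulty: how to show that essential Nielsen classes of $f^n$ cannot be emptied by a (possibly non-proper) homotopy on a non-compact surface. You also correctly intuit that the cutoff $N$ is tied to ``exceptional'' periodic orbits near the ends and that the proof must control behavior there. However, the concrete mechanism you propose for step (iii) --- compactify $L$ to a surface $\bar L$ so that the spA map $f$ extends to a genuine pseudo-Anosov on a compact surface with boundary, then apply Thurston's theorem on $\bar L$ --- does not exist. The ends of $L$ are, by hypothesis, accumulated by genus; such an end admits no compactification to a compact boundary circle (or even to any manifold point), so there is no surface $\bar L$ of the kind you want. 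The compactified object in \cite{endperiodic} is the mapping torus $\overline{N_f}$, a $3$-manifold; the leaf $L$ is a dense, non-compact depth-one leaf spiraling onto the boundary, and does not itself compactify to a finite-type surface on which $f$ becomes pseudo-Anosov. Consequently the entire back half of your argument --- transferring Thurston's theorem across the compactification and ``matching'' periodic orbits --- has no foundation.

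The paper avoids this by compactifying not $L$ but its universal cover: one works on $\overline{\Hyp^2} = \Hyp^2 \cup \partial_\infty\Hyp^2$, where compatible lifts $\widetilde f, \widetilde g$ of homotopic maps agree on the circle at infinity (Cantwell--Conlon). For $n$ large enough that the relevant lift $\widetilde f^n$ has multi sink-source dynamics with all boundary fixed points \emph{recurrent}, one shows (Lemma~\ref{lem:index}) that each boundary fixed point of $\widetilde g^n$ has index exactly $\tfrac12$, by homotoping $\widetilde g^n$ through maps with no new fixed points near $z$ to a map with genuine sink/source behavior; the displacement estimate that makes this possible (Lemma~\ref{lem:K}, in turn resting on Lemma~\ref{lem:commute}) is the rigorous version of your heuristic ``Birkhoff-style trapping.'' The Lefschetz--Hopf theorem applied to the disk $\overline{\Hyp^2}$ (or its double, $S^2$) then forces an interior fixed point of negative index, giving a Nielsen-equivalent periodic point of $g$. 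The threshold $N$ is not a vague ``low-period'' phenomenon but is exactly the largest period among periodic points with an \emph{escaping} half leaf (Lemma~\ref{lem:rec} shows there are only finitely many such); for those the index argument at infinity breaks down, and indeed Section~\ref{sec:ex} exhibits an example where a fixed point of $f$ is eliminated by a homotopy. So: keep your Nielsen-theoretic skeleton, but replace the nonexistent surface compactification with the $\overline{\Hyp^2}$ picture and the boundary-index/Lefschetz computation.
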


The constant $N$ is the highest period of a periodic point lying on an escaping half leaf (see Section \ref{subsec:spa} for definitions). The result is sharp in that one cannot in general remove the constant $N$ -- there exist spun pseudo-Anosovs maps with fixed points and a homotopic homeomorphism without fixed points, see Section \ref{sec:ex} for an example.

Defining the \emph{growth rate} $\lambda(g)$ of an endperiodic map $g$ by
\[\lambda(g)=\lim\sup_{n\rightarrow\infty}\sqrt[n]{\# \text{Fix}(g^n)},\] we obtain the following corollary, generalizing Corollary 6.2 of \cite{endperiodic}.

\begin{corx}
      Let $f\colon L\rightarrow L$ be spA.  Then \[\lambda(f)=\inf_{g\simeq f}\lambda(g), \] where the infimum is taken over all maps $g$ homotopic to $f$.
\end{corx}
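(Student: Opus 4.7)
The plan is to deduce the corollary directly from Theorem A by formal manipulation. Theorem A furnishes an integer $N$ (depending only on $f$) such that for every homeomorphism $g$ homotopic to $f$ and every $n > N$, one has $\#\Fix(g^n) \geq \#\Fix(f^n)$.

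First I would take $n$-th roots of this inequality, valid for all $n > N$, obtaining $\sqrt[n]{\#\Fix(g^n)} \geq \sqrt[n]{\#\Fix(f^n)}$. Then I would pass to $\limsup$ as $n \to \infty$: since $\limsup$ is insensitive to changes on any finite initial segment of a sequence, the inequality survives in the limit and yields $\lambda(g) \geq \lambda(f)$ for each $g \simeq f$. Taking the infimum over the homotopy class gives $\inf_{g \simeq f} \lambda(g) \geq \lambda(f)$, while the reverse inequality $\inf_{g \simeq f} \lambda(g) \leq \lambda(f)$ is immediate because $f$ itself lies in its own homotopy class. Combining the two yields the claimed equality.

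The proof is essentially a direct formal consequence of Theorem A, so there is no real obstacle of its own. The only conceptual point worth remarking on is that the finitely many low periods $n \leq N$ on which Theorem A provides no information do not affect the limit defining $\lambda$, which is precisely why one can pass from the period-by-period comparison of Theorem A to the asymptotic comparison of growth rates. One might also note in passing that $\lambda(f)$ is finite (so that the identity is a genuine statement rather than a tautological $\infty=\infty$), which follows from the explicit description of the periodic point count of a spun pseudo-Anosov in terms of its invariant foliations.
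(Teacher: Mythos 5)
Your overall strategy is the one the paper intends: the paper states the corollary as an immediate consequence of Theorem~A and gives no further argument, and your structure (take $n$-th roots, pass to $\limsup$, note the reverse inequality is trivial) is exactly right. However, there is a genuine gap in the very first step. You restate Theorem~A as the inequality $\#\Fix(g^n)\geq\#\Fix(f^n)$ for all $n>N$, but Theorem~A compares the number of points of \emph{exact} period $n$, call it $\#\mathrm{Per}_n(\cdot)$, not the number of fixed points of the $n$-th iterate. Since $\#\Fix(g^n)=\sum_{d\mid n}\#\mathrm{Per}_d(g)$, even for $n>N$ the set $\Fix(g^n)$ contains contributions from divisors $d\leq N$, about which Theorem~A is silent. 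Your remark that ``$\limsup$ is insensitive to a finite initial segment'' does not repair this, because the problem is not confined to an initial segment of the sequence: every $n$ that has a small divisor is affected.

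The gap is fixable, but it needs to be addressed rather than elided. Summing Theorem~A over the divisors $d\mid n$ with $d>N$ gives $\#\Fix(g^n)\geq\#\Fix(f^n)-C$, where $C=\sum_{d\leq N}\#\mathrm{Per}_d(f)$ is a constant (finite because a spA map has finitely many periodic points of each period, a fact you would need to cite). One then shows this additive defect vanishes under $n$-th roots. This last step is automatic when $\lambda(f)>1$, since then $\#\Fix(f^n)$ is eventually much larger than $C$ along a subsequence realizing the $\limsup$; you gesture at something like this in your last sentence about $\lambda(f)$ being a ``genuine'' quantity, but the point that must actually be used is a positive lower bound on $\lambda(f)$, not an upper bound, and you should either justify it or at least flag it as the input being used.
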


Such a map $g$ is also isotopic to a Handel--Miller representative originally by work of Handel and Miller and later written up and further studied by Cantwell, Conlon, and Fenley (see Theorem 4.54 of \cite{cantwell2021endperiodic}). These are maps that preserve a pair of laminations $\Lambda^+_{HM}$ and $\Lambda^-_{HM}$, called the Handel--Miller laminations. We let $\mathcal{K}=\Lambda^+_{HM}\cap\Lambda^-_{HM}$, and call $h\sm{\mathcal{K}}$ the \emph{core dynamical system} of $h$ after Cantwell, Conlon, and Fenley.  We also obtain a version of Theorem A for atoroidal Handel--Miller maps.

\begin{thmx}
    Let $h\sm{L}$ be an atoroidal Handel--Miller map with stable and unstable Handel--Miller laminations $\Lambda^+_{HM}$ and $\Lambda^-_{HM}$. For some $N\in\N$, every $g\simeq h$ has no fewer points of period $n$ than the core dynamical system of $h$ for all $n>N$.
\end{thmx}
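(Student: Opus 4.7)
The plan is to reduce Theorem B to Theorem A by matching periodic orbits between the Handel--Miller map $h$ and a spun pseudo-Anosov representative $f$ of the same isotopy class. Since $h$ is atoroidal and endperiodic, both a spA representative $f$ (from \cite{endperiodic}) and the Handel--Miller representative $h$ live in the same isotopy class, so any $g\simeq h$ also satisfies $g\simeq f$. Theorem A then provides an $N_1\in\N$ such that for all $n>N_1$, every $g\simeq h$ has no fewer period-$n$ points than $f$.

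It remains to compare the period-$n$ points of $f$ to the period-$n$ points of $h$ lying in $\mathcal{K}$. By construction, the bound $N_1$ from Theorem A exceeds the maximum period of a periodic point of $f$ on an escaping half-leaf, so for $n>N_1$ every period-$n$ point of $f$ is non-escaping and hence lies in the intersection of the (non-escaping parts of the) stable and unstable singular foliations of $f$. I would then appeal to the correspondence between the spA foliations and the Handel--Miller laminations: the HM laminations should be recoverable from the non-escaping leaves of the spA foliations (after collapsing product regions and pinching singular leaves as needed), producing a semi-conjugacy from the restriction of $f$ to its non-escaping set onto $h\sm{\mathcal{K}}$ that is a bijection on periodic orbits. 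Assembling this correspondence from the constructions in \cite{endperiodic} and \cite{cantwell2021endperiodic} would yield $\#\Fix(f^n)=\#\Fix(h^n|_{\mathcal{K}})$ for $n>N_1$, and combining with the previous paragraph with $N=N_1$ proves the theorem.

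The main obstacle is the second step: producing the orbit-level bijection between non-escaping periodic points of $f$ and periodic points of $h$ in $\mathcal{K}$. While both representatives are known to exist in a common isotopy class, extracting an explicit matching of their core dynamics requires controlling how the spA foliations degenerate to the HM laminations near the escaping ends, where the two normal forms differ essentially. A purely Nielsen-theoretic alternative would be to show directly that distinct periodic orbits of $h\sm{\mathcal{K}}$ give distinct Nielsen classes in the ambient isotopy class of $h$ and that each such class is realized by $f$ with no extras for $n>N$; this would bypass a global semi-conjugacy but still demand a careful analysis of the fundamental-group action on the non-escaping subsurface, paralleling the argument behind Theorem A.
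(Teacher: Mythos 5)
Your main plan --- reduce Theorem C to Theorem A by producing a bijection on periodic orbits between the spA representative $f$ and the Handel--Miller core $h\sm{\mathcal{K}}$ --- is a genuinely different route from the paper's, and as you yourself flag, the crucial step is not filled in. The correspondence between the spA invariant foliations and the Handel--Miller laminations (Section 8 of \cite{endperiodic}) is not stated at the level of a semi-conjugacy that is a bijection on periodic orbits, and extracting one would require essentially redoing a large part of the analysis. Even granting such a correspondence, you would still need to show that the non-escaping period-$n$ points of $f$ land precisely in $\mathcal{K}$, which involves the geometry of the principal regions; there is no reason a priori that ``non-escaping for $f$'' lines up with ``lies in $\Lambda^+_{HM}\cap\Lambda^-_{HM}$'' without further argument. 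So the first approach has a real gap, not just a routine verification.

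The paper does not go through $f$ at all. It reruns the Lefschetz--Hopf / Nielsen argument directly for $h$, re-proving the supporting lemmas (the neighborhood basis Lemma~\ref{lem:nbd}, the ``moves points far'' Lemma~\ref{lem:K}, and the index Lemma~\ref{lem:index}, the latter becoming Lemma~\ref{hindex} with an added uniqueness hypothesis) with $h$ in place of $f$. The one genuinely HM-specific ingredient is Proposition~8.2 of \cite{endperiodic}: for a lift $\widetilde{h}$ fixing a point $\widetilde{q}$, either $\widetilde{q}$ lies in the closure of a lift of a principal region, or $\widetilde{q}$ is the \emph{unique} fixed point of the lift and its leaves' endpoints are exactly the boundary fixed points, yielding multi sink-source dynamics. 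Since there are only finitely many principal regions, each with finitely many boundary leaves, the first alternative contributes only finitely many periodic points, which are absorbed into $N$ alongside the finitely many periodic points on escaping half leaves. This is where the constant $N$ actually comes from in the HM case; your proposal does not engage with the principal-region phenomenon at all, and your ``Nielsen-theoretic alternative'' sketch, while closer in spirit to the paper, would need precisely Proposition~8.2 to get off the ground. If you want to salvage the reduction-to-Theorem-A strategy, you would need to prove the orbit-level correspondence as a standalone result; it is cleaner (and is what the paper does) to just re-run the index argument for $h$ using Proposition~8.2.
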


Note that the fact that we only obtain a statement on the number of periodic points for the core dynamical system of $h$ is unsurprising, as Handel--Miller maps are only uniquely defined on $\mathcal{K}$. Handel--Miller maps and spun pseudo-Anosov maps are closely related (see Section 8 of \cite{endperiodic}). As with Theorem  A, the constant $N$ cannot in general be removed from the theorem statement.\\

The proof of Theorem 6.1 in \cite{endperiodic} (Theorem \ref{thm:lmt} in this paper) goes as follows. Given an endperiodic map $g$ homotopic to a spA map $f$, the authors find an injective map from periodic points of period $n$ of $g$ to Nielsen equivalent points of $f$ of the same period. They do this via showing that points on $\hb$ fixed by a lift $\widetilde{g}$ of $g$ are sources or sinks for the entire action of $\widetilde{g}$ on $\overline{\Hyp^2}$; this is where the endperiodic assumption on $g$ comes in. They then conclude via the Lefschetz-Hopf theorem that $\widetilde{g}$ must have a fixed point in $\Hyp^2$, proving Theorem \ref{thm:lmt} below. We will use a similar argument here, but the argument in \cite{endperiodic} that a fixed point of $\widetilde{g}$ on $\hb$ is a sink or source for the action on $\overline{\Hyp^2}$ does not go through. Instead, in Lemma \ref{lem:index} we show that $\widetilde{g}$ is homotopic through maps with no new fixed points to a map that has sink or source dynamics in a neighborhood of a fixed point $z\in\hb$ of $\widetilde{g}$. We do this by taking inspiration from work of Handel and Thurston (see Lemma 3.1 of \cite{handelthurston} in particular), though we must modify their arguments to the setting of an infinite type surface.

\subsection{Acknowledgments}\label{subsec:ack} I would like to thank Sam Taylor for many helpful conversations about this problem. I would also like to thank Dan Margalit and the referee for feedback on earlier drafts of this paper.

\section{Background}\label{sec:background}

\subsection{Endperiodic homeomorphisms}\label{subsec:endperiodic}

Throughout, let $L$ be an infinite type surface with no boundary and finitely many ends, each of which is accumulated by genus. We can put a hyperbolic metric on such a surface and identify the universal cover of $L$ with $\Hyp^2$. We may further assume that this metric is \emph{standard}, i.e. doesn't contain any embedded hyperbolic half planes. Let $\pi\colon \Hyp^2\rightarrow L$ be a covering map, and identify the group of deck transformations with $\pi_1(L)$. Any homeomorphism $h\colon L\rightarrow L$ can be lifted to $\Hyp^2$ and extended to $\overline{\Hyp^2}=\Hyp^2\cup\partial_\infty\Hyp^2$ by work of Cantwell and Conlon (see Theorem 2 of \cite{cantwell2015hyperbolic}). If $f,g\sm{L}$ are homotopic maps with compatible lifts $\widetilde{f},\widetilde{g}\sm{\overline{\Hyp^2}}$ (i.e. the lifts come from lifting a homotopy), then $\widetilde{f}$ and $\widetilde{g}$ agree on $\hb$ (see Corollary 5 of \cite{cantwell2015hyperbolic}).

In proving Lemma \ref{lem:K}, we will need the following lemma.

\begin{lemma}\label{lem:commute} Let $g\sm{L}$ be atoroidal, and let $\widetilde{g}\sm{\overline{\Hyp^2}}$ be a lift of $g$. Then $\widetilde{g}$ commutes with no nontrivial elements of $\pi_1(L)$. 
\end{lemma}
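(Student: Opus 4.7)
The plan is to argue by contradiction: suppose $\widetilde{g}$ commutes with some nontrivial $\widetilde{\gamma}\in\pi_1(L)$. Because $L$ carries a standard hyperbolic metric and has no boundary, $\widetilde{\gamma}$ acts on $\overline{\Hyp^2}$ as a loxodromic isometry with exactly two fixed points $p^\pm\in\hb$ joined by an axis $\ell\subset\Hyp^2$. The relation $\widetilde{g}\widetilde{\gamma}=\widetilde{\gamma}\widetilde{g}$ forces $\widetilde{g}(\{p^+,p^-\})=\{p^+,p^-\}$, since the image under $\widetilde{g}$ of a $\widetilde{\gamma}$-fixed point must itself be fixed by $\widetilde{\gamma}$.

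I would then show that $g$ preserves the free homotopy class of the closed geodesic $c:=\pi(\ell)\subset L$. The image $\widetilde{g}(\ell)$ is a continuous path in $\overline{\Hyp^2}$ whose two endpoints at infinity lie in $\{p^+,p^-\}$, and it is $\widetilde{\gamma}$-invariant because $\widetilde{\gamma}\widetilde{g}(\ell)=\widetilde{g}\widetilde{\gamma}(\ell)=\widetilde{g}(\ell)$. Projecting via $\pi\circ\widetilde{g}=g\circ\pi$, we obtain $\pi(\widetilde{g}(\ell))=g(c)$, a closed curve in $L$ admitting a lift with the same endpoints at infinity as $\ell$. Thus $g(c)$ and $c$ represent the same conjugacy class (up to inversion) in $\pi_1(L)$.

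The final step is to convert the preserved class $[c]$ into an essential multicurve fixed by $g$, contradicting atoroidality. If $c$ is simple, then $\{c\}$ is such a multicurve: it is nontrivial because $\widetilde{\gamma}\ne 1$ and nonperipheral because every end of $L$ is accumulated by genus. If $c$ is non-simple, let $\Sigma:=\mathrm{Fill}(c)$ denote the minimal essential subsurface of $L$ containing $c$, obtained by adjoining to a regular neighborhood $N$ of $c$ every disk component of $L\setminus N$. Since $c$ is compact and $L$ has no disk-like ends, $\Sigma$ is a compact proper subsurface of $L$, so $\partial\Sigma$ is a nonempty essential multicurve. The construction depends only on the free homotopy class of $c$, so $g(c)\simeq c$ yields $g(\Sigma)\simeq\Sigma$, and therefore $g(\partial\Sigma)\simeq\partial\Sigma$, the desired contradiction.

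I expect this last step to be the main obstacle. It requires knowing that for any closed geodesic in $L$ the filled subsurface is a compact proper subsurface with essential boundary; both facts rest on the accumulation-by-genus hypothesis on the ends of $L$, which rules out noncompact disk complementary regions and ensures $\Sigma\ne L$. The commutation manipulations and the identification of $\widetilde{g}$'s action on the fixed points at infinity are straightforward by comparison.
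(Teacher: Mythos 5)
Your argument is correct and follows essentially the same route as the paper: commutation of $\widetilde{g}$ with a hyperbolic $\gamma$ forces $\widetilde{g}$ to preserve the two fixed points of $\gamma$ on $\hb$, hence $g$ preserves the free homotopy class of the closed geodesic $\pi(A_\gamma)$, contradicting atoroidality. You are a bit more careful than the paper at two points the paper compresses: you correctly treat $\widetilde{g}(A_\gamma)$ as a mere $\gamma$-invariant curve with prescribed endpoints rather than a geodesic, and you spell out the filling-subsurface step needed to pass from a preserved (possibly non-simple) closed geodesic to a preserved essential multicurve.
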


\begin{proof}
 Suppose $\widetilde{g}$ commuted with some nontrivial $\gamma\in\pi_1(L)$.  As $L$ has no cusps, every element in $\pi_1(L)$ acts of $\overline{\Hyp^2}$ with an axis of translation, so $\gamma$ has some axis of translation $A_\gamma$. 
 Then \(\gamma\widetilde{g}A_\gamma=\widetilde{g}\gamma A_\gamma=\widetilde{g}A_\gamma,\) so $\gamma$ fixes the geodesic $\widetilde{g}A_\gamma$. Since nontrivial isometries of $\Hyp^2$ can fix at most one geodesic, we must have $\widetilde{g}A_\gamma=A_\gamma$. But then $g$ fixes the closed geodesic $\pi(A_\gamma)$, contradicting the fact that $g$ is atoroidal.
\end{proof}

Let $g\sm{L}$ be a homeomorphism, and let $\mathcal{E}$ be an end of $L$. The end $\mathcal{E}$ is called \emph{attracting} under $g$ if for some neighborhood $U_\mathcal{E}$ of $\mathcal{E}$, positive iterates of some power of $g$ applied to $U_\mathcal{E}$ escape the end $\mathcal{E}$, i.e. for some $n$, $g^n(U_\mathcal{E})\subseteq U_\mathcal{E}$ and $\bigcap_{i\geq 1}g^{ni}(U_\mathcal{E})=\varnothing$. An end is called \emph{repelling} if it is attracting under $g^{-1}$. The map $g$ is called \emph{endperiodic} if every end of $L$ is either attracting or repelling under $g$.  Fix such a neighborhood $U_\mathcal{E}$ of each end $\mathcal{E}$ such that the $U_\mathcal{E}$ are pairwise disjoint. Let $U_+$ be the union of $U_\mathcal{E}$ over all attracting ends, and $U_-$ the union of $U_\mathcal{E}$ over all repelling ends. Define the \emph{positive} and \emph{negative escaping sets} of $g$ by 
\[\mathcal{U}_+=\bigcup_{n\geq 0}g^{-n}(U_+)\] and  \[\mathcal{U}_-=\bigcup_{n\geq 0}g^n(U_-)\] respectively. 

A $g$\emph{-juncture} of an end $\mathcal{E}$ is a compact 1-manifold that bounds such a neighborhood $U_\mathcal{E}$ of $\mathcal{E}$ as above. A $g$-juncture is \emph{positive} (resp. \emph{negative}) if the corresponding end is positive (resp. negative). A \emph{tightened} $g$\emph{-juncture} is the geodesic representative of a $g$-juncture. For our purposes, we may pick each $g$-juncture to be a single closed geodesic; fix such a choice. 

Given an endperiodic map $g\sm{L}$, we can form the \emph{mapping torus} $N_g=L\times I/(x,1)\sim(g(x),0)$, a non-compact 3-manifold.  Following Fenley, we can then create the \emph{compactified mapping torus} $\overline{N_g}=N_g\cup\partial_+N_g\cup\partial_-N_g$ as follows. Add an ideal point to each flow line in $\mathcal{U}_+\times I/\sim\;\subseteq N_g$; call this set of ideal points $\partial_+N_g$. Define $\partial_-N_g$ similarly. This defines a depth 1 foliation on $N_g$, where the depth 1 leaves are all homeomorphic to $L$, make up the set $N_g\setminus\{\partial_+N_g\cup\partial_-N_g\}$, and spiral around the depth 0 leaves, which are compact and make up the set $\partial_+N_g\cup\partial_-N_g$. For more details on this construction, see \cite{fenley}.

Another way to view this construction, following the exposition of Field, Kim, Leininger, and Loving in \cite{field2023end}, is as follows. As $N_g$ is a fiber bundle over the circle with fiber $L$, we have a covering map $p\colon L\times(-\infty,\infty)\rightarrow N_f$ coming from unwinding the circle base. Define $\widetilde{N_g}=L\times (-\infty,\infty)\cup\mathcal{U}_+\times\{\infty\}\cup\mathcal{U}_-\times\{-\infty\}$. The map $G\sm{\widetilde{N_g}}$ defined by $G(x,t)=(g(x),t-1)$, where $\pm\infty-1=\pm\infty$, is a homeomorphism, and the cyclic group $\langle G\rangle$ acts on $\widetilde{N_g}$ properly discontinuously and cocompactly. Thus, the quotient $\overline{N_g}=\widetilde{N_g}/\langle G\rangle$ is a compact manifold, with $N_g=L\times(-\infty,\infty)/\langle G\rangle$. Its boundary is $\partial_+N_g\cup\partial_-N_g$, where $\partial_+N_g=\mathcal{U}_+/\langle G\rangle$ and $\partial_-N_g=\mathcal{U}_-/\langle G\rangle$.

\subsection{Spun pseudo-Anosov maps}\label{subsec:spa} 

Before defining spun pseudo-Anosov maps, we record the main property of these maps proven in \cite{endperiodic} that we will strengthen. 

\begin{theorem}\label{thm:lmt} (Theorem 6.1 of \cite{endperiodic}). If $f\sm{L}$ is spA, then for each $n\geq1$, the homeomorphism $f$ has the minimum number of periodic points of period $n$ among all homotopic, endperiodic homeomorphisms.
\end{theorem}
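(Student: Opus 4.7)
For each $n\geq 1$, I plan to show $\#\text{Fix}(f^n)\leq\#\text{Fix}(g^n)$ via a Nielsen class comparison. Because $f$ is spA, every Nielsen class of $f^n$-periodic points contains exactly one point, so $\#\text{Fix}(f^n)$ equals the number of $f$-Nielsen classes of period $n$. Compatible lifts of $f^n$ and $g^n$ put Nielsen classes of the two maps into a natural bijection, so it suffices to show that every $g$-Nielsen class of period $n$ is nonempty. Equivalently, given any fixed point $\widetilde{p}\in\Hyp^2$ of a lift $\widetilde{f^n}\colon\overline{\Hyp^2}\to\overline{\Hyp^2}$, the compatible lift $\widetilde{g^n}$ of $g^n$ must have at least one fixed point in $\Hyp^2$.

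Since $f\simeq g$, the compatible lifts $\widetilde{f^n}$ and $\widetilde{g^n}$ agree on $\hb$ by Corollary~5 of \cite{cantwell2015hyperbolic}, so they share the same boundary fixed point set $Z\subset\hb$. Each $z\in Z$ is the ideal endpoint of a lift of a singular leaf of the spA foliations that escapes into an attracting or repelling end $\mathcal{E}$ of $f$; since $g$ is endperiodic and homotopic to $f$, these are also the attracting or repelling ends of $g$. The central step is to show that each such $z$ is a topological sink (if $\mathcal{E}$ is attracting for $g^n$) or source (if repelling) for $\widetilde{g^n}$ on a full two-dimensional neighborhood in $\overline{\Hyp^2}$, not merely along $\hb$. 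I would build this by using the $g$-juncture structure to produce nested $L$-neighborhoods of $\mathcal{E}$ whose $g^n$-iterates converge into (or out of) $\mathcal{E}$, and lifting these to nested $\overline{\Hyp^2}$-neighborhoods of $z$ on which $\widetilde{g^n}$ acts by contraction (or expansion).

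Given the sink/source dichotomy at every $z\in Z$, a Lefschetz-Hopf index argument applied to $\widetilde{g^n}\colon\overline{\Hyp^2}\to\overline{\Hyp^2}$ produces the required interior fixed point. The Lefschetz number of a self-homeomorphism of the closed disk equals $1$, and the contribution of boundary sinks and sources can either be computed directly or canceled by a small isotopy near $\hb$ that introduces no new fixed points, forcing a nonzero interior contribution. Projecting yields a $g$-periodic point in the Nielsen class determined by $\widetilde{g^n}$; distinct $f$-Nielsen classes produce distinct $g$-Nielsen classes, giving the desired inequality.

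The main obstacle will be the sink/source verification in the second paragraph—promoting one-dimensional dynamics along $\hb$ to two-dimensional dynamics on $\overline{\Hyp^2}$. Controlling open $L$-neighborhoods of the ends under iteration of $g^n$ is possible precisely because $g$ is endperiodic; without this hypothesis $\widetilde{g^n}$ can have transverse saddle-like behavior at a boundary fixed point and the Lefschetz argument breaks down. This is exactly the difficulty that the present paper circumvents via a Handel--Thurston-style homotopy in Lemma \ref{lem:index}, in order to establish the stronger Theorem A.
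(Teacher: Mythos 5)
This is a theorem cited from \cite{endperiodic}; the present paper states it but does not reprove it, so the only point of comparison is the introduction's sketch of the LMT argument. Your scaffolding---a Nielsen class injection from $f$-periodic points to $g$-periodic points, boundary agreement of compatible lifts, sink/source dynamics at boundary fixed points, and a Lefschetz--Hopf finish---matches that sketch.

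However, the mechanism you propose for the sink/source step contains a genuine error. You assert that each boundary fixed point $z$ is the ideal endpoint of a half leaf of the spA foliations that escapes into an end $\mathcal{E}$, and then propose to build neighborhood control at $z$ by lifting nested $L$-neighborhoods of $\mathcal{E}$. But half leaves based at periodic points are typically \emph{recurrent}, not escaping: by the paper's Lemma~\ref{lem:rec}, only finitely many periodic half leaves escape an end. A recurrent half leaf crosses a fixed compact closed geodesic $\alpha\subset L$ infinitely often, so the corresponding ideal point $z$ is not associated to any end of $L$, and lifts of nested end-neighborhoods do not form a neighborhood basis at $z$; the correct basis (Lemma~\ref{lem:nbd}) is built from lifts of $\alpha$. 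This is precisely why the present paper's stronger Theorem~A replaces the appeal to endperiodicity with the Handel--Thurston-style homotopy of Lemma~\ref{lem:index}, which handles recurrent boundary points. The LMT proof itself must exploit endperiodicity of $g$ in a way that covers recurrent $z$ as well, not merely escaping ones. As written, your second step would fail at every recurrent boundary fixed point, which is the generic case.
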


In \cite{endperiodic}, a spun pseudo-Anosov map $f\sm{L}$ is defined as follows.

\begin{mydef}\label{def:spa} (Definition 3.1 of \cite{endperiodic}).
    An endperiodic map $f\sm{L}$ is \emph{spun pseudo-Anosov} (or \emph{spA}) if there exists a depth one foliation $\mathcal{F}$ of a hyperbolic 3-manifold $M$ and a transverse, circular almost pseudo-Anosov flow $\varphi$ that is minimally blown up with respect to $\mathcal{F}$ such that $L$ is a leaf of $\mathcal{F}$ and $f$ is a power of the first return map induced by $\varphi$.
\end{mydef}

 The pseudo-Anosov flow $\varphi$ being \emph{circular} means it is the suspension flow of a pseudo-Anosov homeomorphism, i.e. for a pseudo-Anosov homeomorphism $h\sm{S}$ on a surface $S$, $\varphi$ is the image of the vertical flow $\psi$ on $S\times I$ under the quotient map $S\times I/(x,1)\sim(h(x),0)$. A \emph{depth one foliation} is a foliation such that the complement of the compact leaves is a circle bundle.
 
 The definitions of \emph{almost} pseudo-Anosov flows and being \emph{minimally blown up} with respect to the foliation $\mathcal{F}$ involve the concept of dynamic blowups, defined by Mosher in \cite{mosher1990correction}. The interested reader can find a discussion of dynamic blowups in this context in Section 3 of \cite{transverse}. They are irrelevant to our purposes, as Theorem 4.2 of \cite{endperiodic} states every spA map is isotopic to an spA$^+$ map, as defined below.

 \begin{mydef}\label{def:spa+} (Definition 3.4 of \cite{endperiodic}). An endperiodic map $f\sm{L}$ is \emph{spA$^+$} if there exists a circular pseudo-Anosov flow $\varphi$ on a hyperbolic 3-manifold $M$ and a depth one foliation $\mathcal{F}$ transverse to $\varphi$ such that $L$ is a depth one leaf of $\mathcal{F}$ and $f$ is a power of the first return map induced by $\varphi$.
  \end{mydef}

Thus, given a spA map $f\sm{L}$, it is isotopic to a spA$^+$ map $f^+$. Both $f$ and $f^+$ minimize points of period $n$ for all $n$ over all endperiodic homotopic maps by Theorem 6.1 of \cite{endperiodic}, so for all $n$, $f$ and $f^+$ have the same number of points of period $n$. Therefore, for our purposes we may as well replace $f$ with $f^+$ and assume $f$ is spA$^+$ from here on, although this will not affect any of the arguments. The only reason to do this is to show that we don't need to worry about dynamic blowups for the purposes of this paper. 

\subsubsection{Periodic points and leaves}

A spA map $f\sm{L}$ comes with a collection of data. This includes the 3-manifold $M$, the depth one foliation $\mathcal{F}$, and the pseudo-Anosov suspension flow $\varphi$ as in Definition \ref{def:spa+}. Let $\mathcal{F}_0$ and $\mathcal{F}_1$ be the sets of depth 0 and depth 1 leaves of $\mathcal{F}$ respectively. We also have $N$, the compactified mapping torus of $f$, which we can identify with the connected component of $M\hspace{-0.5mm}\setminus\hspace{-2mm}\setminus \mathcal{F}_0$ containing $L$, where $M\hspace{-0.5mm}\setminus\hspace{-2mm}\setminus \mathcal{F}_0$ is $M$ cut along $\mathcal{F}_0$ (see Section 1.3.1 of \cite{primer} for a definition). As $\varphi$ is a pseudo-Anosov flow on $M$, we have the invariant \emph{stable} and \emph{unstable foliations}, denoted $W^s$ and $W^u$. We can define foliations on $L$ and $N$ by $\mathcal{W}^{u/s}=W^{u/s}\cap L$ and $W_N^{u/s}=W^{u/s}\cap N$. 

Let $\ell$ be a half leaf of $\lfol$ based at a point $p\in L$. We say $\ell $ is \emph{periodic} if $p$ is a periodic point of $f$. Given an end $\mathcal{E}$ of $L$, we say $\ell$ \emph{escapes the end} $\mathcal{E}$ or $\ell$ is an \emph{escaping half leaf} if for all neighborhoods $U_\mathcal{E}$ of $\mathcal{E}$, all but a compact portion of $\ell$ is contained in $U_\mathcal{E}$. Otherwise, we say $\ell$ is \emph{recurrent}. Note that $\ell$ is recurrent if and only if it intersects some compact set $K\subseteq L$ infinitely often, and that this compact set may be taken to be a closed geodesic $\alpha$. By Proposition 4.6 of \cite{endperiodic} (see also Theorem C of \cite{fenley2009geometry}), the standard hyperbolic metric on $L$ can be chosen such that half leaves of $\widetilde{\mathcal{W}}^{u/s}$ are uniformly quasi-geodesic, so a lift $\widetilde{\ell}$ of $\ell$ to $\widetilde{L}$ has a well-defined endpoint on $\hb$. 

Let $p\in L$ be a fixed point of $f\sm{L}$, and let $\widetilde{f}\sm{\widetilde{L}}$ be a lift fixing $\widetilde{p}$, a lift of $p$. Then we can relate the dynamics of $\widetilde{f}$ on $\hb$ with the endpoints of lifts of half leaves of $\lfol$ based at $p$. Say a lift $\widetilde{g}\sm{\widetilde{L}}$ of an endperiodic map $g\sm{L}$ has \emph{multi sink-source dynamics} if $\widetilde{g}\mid_{\hb}$ has at least four fixed points, and the fixed points alternate between attracting and repelling with regard to the action of $\widetilde{g}$ on $\hb$. Then by Theorem 4.1 of \cite{endperiodic}, for $f\sm{L}$ spA with lift $\widetilde{f}\sm{\widetilde{L}}$ with fixed point $\widetilde{p}$ a lift of $p\in L$, there exists some $k\geq 1$ such that $\widetilde{f}^k$ acts on $\hb$ with multi sink-source dynamics. Furthermore, the fixed points of $\widetilde{f}^k$ on $\hb$ are exactly the endpoints of lifts of half leaves of $\lfol$ based at $\widetilde{p}$. Thus, if $\widetilde{f}$ acts on $\hb$ with multi sink-source dynamics, we can call a fixed point $z\in\hb$ \emph{escaping} or \emph{recurrent} based on the behavior of the corresponding half leaf.

\subsection{The Lefschetz-Hopf theorem}
Our proof of Theorem A relies on the Lefschetz-Hopf theorem; we recall the necessary background here (see \cite{brown1971lefschetz} for more details). Given a map $h\sm{S^n}$ from the $n$-sphere to itself, we have an induced homomorphism $h_*\sm{H_n(S^n)}$. Since $H_n(S^n)\cong \Z$, the map $h_*$ is of the form $h_*(x)=kx$ for some $k\in\Z$. We define the \emph{degree} of $h$ by $deg(h)=k$. 

Now suppose $j\sm{\R^n}$ is a map with an isolated fixed point $x_0\in \R^n$. Let $B$ be a closed ball centered at $x_0$ and containing no other fixed points of $j$, and define a map $h(x)=\frac{x-j(x)}{||x-j(x)||}$
from $\partial B$ to the unit sphere. We define the \emph{fixed point index} of $j$ at $x_0$ by $I(j,x_0)=deg(h)$. Note that if we have a homotopy $\hty{H}{\R^n}$ such that $H_0=j$, $H_1=j'$, $H_t(x_0)=x_0$, and $x_0$ is the only fixed point of $H_t$ in $B$ for all $t$, then $I(H_t,x_0)$ is well-defined and continuous in $t$, and therefore must be constant, so $I(j,x_0)=I(j',x_0)$.

We can extend the definition of the index of a fixed point to a map $j\sm{M}$ on a manifold without boundary by restricting our attention to a chart. If $\partial M\neq\varnothing$ and $x_0\in\partial M$ is a fixed point of $j$, we define $I(j,x_0)=\frac{1}{2}I(Dj, x_0)$, where $Dj\sm{DM}$ is the double map, defined to be $j$ on each copy of $M$ in $DM=M\cup_{id_{\partial M}}M$.

The Lefschetz-Hopf theorem states that for $j\sm{X}$ a map on a triangulable space $X$ with finitely many fixed points, we have 
\[\sum_{x_0\in\text{Fix}(j)}I(j,x_0)=\Lambda_j,\] where $\Lambda_j$ is the \emph{Lefschetz number} of $j$, defined by
\[\Lambda_j=\sum_{i\geq 0}(-1)^i tr(j_*\sm{H_i(X,\Q)}).\] 

We will be concerned with that case that $j$ is an orientation-preserving homeomorphism of the closed disk, in which case $\Lambda_j=1$.

\section{The proof of Theorem A}
Let $f\sm{L}$ be spA, and let $g\sm{L}$ be homotopic to $f$ via a homotopy $H\colon L\times I\rightarrow L$ with $H_0=f$ and $H_1=g$. Let $\widetilde{f}\sm{\overline{\Hyp^2}}$ be a lift of $f$, and let $\widetilde{H}$ be a lift of $H$ with $\widetilde{H}_0=\widetilde{f}$. Then $\widetilde{g}=\widetilde{H}_1$ is a lift of $g$ with $\widetilde{f}\mid_{\hb}=\widetilde{g}\mid_{\hb}$. Suppose $p\in L$ is fixed by $f^n$ and $q\in L$ is fixed by $g^n$ for some $n$. We say $p$ and $q$ are \emph{Nielsen equivalent} if there exist lifts $\widetilde{p}$ and $\widetilde{q}$ of $p$ and $q$ to $\Hyp^2$ such that $\widetilde{f}^n(\widetilde{p})=\widetilde{p}$ and $\widetilde{g}^n(\widetilde{q})=\widetilde{q}$.

To prove our main theorem, we'd like to show that for every point of period $n$ of $f$, there is a Nielsen equivalent point of period $n$ of $g$, thus getting an injective map from points of period $n$ of $f$ to points of period $n$ of $g$. To do this, we could use the fact that for compatible lifts $\widetilde{f},\widetilde{g}\sm{\overline{\Hyp^2}}$ of $f$ and $g$, the lifts agree on $\hb$ and (up to passing to a power) act on $\hb$ with multi sink-source dynamics. If we can show that each fixed point of $\widetilde{g}^n$ on $\hb$ has index $\frac{1}{2}$, then $\widetilde{g}^n$ will have to have a fixed point of negative index in $\Hyp^2$. However, to show that a fixed point $z\in\hb$ of $\widetilde{g}^n$ has index $\frac{1}{2}$, we'll need some control over the dynamics of $\widetilde{g}^n$ in $\Hyp^2$ near $z$. To get this control, we'll focus on recurrent fixed points $z\in\hb$. The following lemma shows that restricting our attention in this way won't affect the limit in the main theorem, as we are only ignoring finitely many half leaves.

\begin{lemma} \label{lem:rec}
    Only finitely many periodic half leaves of $f$ escape an end.
\end{lemma}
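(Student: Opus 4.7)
The plan is to pass to the 3-manifold picture and exploit compactness of $\overline{N_f}$. Since $f$ is spA$^+$, it is the first return of a circular pseudo-Anosov flow $\varphi$ on a 3-manifold $M$ to the depth one leaf $L$, and $\overline{N_f}$ is identified with the compact component of $M$ cut along the depth zero leaves of $\mathcal{F}$ containing $L$. A periodic point $p$ of $f$ corresponds to a closed orbit $\gamma_p$ of $\varphi$, and the half leaves of $\lfol$ at $p$ correspond (near $p$) to the finitely many 2-dimensional prongs of $\mfol$ at $\gamma_p$. Each end $\mathcal{E}$ of $L$ is compactified in $\overline{N_f}$ by a component $\Sigma_\mathcal{E} \subset \partial_\pm N_f$, which is a closed surface on which $\mfol$ restricts to a 1-dimensional pseudo-Anosov foliation.

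The heart of the argument is the following claim: a periodic half leaf $\ell$ at $p$ escapes end $\mathcal{E}$ if and only if the corresponding 2d prong $P$ at $\gamma_p$ accumulates on $\Sigma_\mathcal{E}$ along a separatrix of the boundary pseudo-Anosov foliation on $\Sigma_\mathcal{E}$. The forward direction is natural, since escaping an end of $L$ is equivalent, in $\overline{N_f}$, to accumulating on the corresponding boundary component. The reverse direction uses that a 2d prong whose limit on $\Sigma_\mathcal{E}$ is a regular (necessarily dense) leaf would meet $L$ recurrently, so the half leaf at $p$ would return to compact sets of $L$ rather than escape.

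Granted the claim, finiteness follows quickly. Each $\Sigma_\mathcal{E}$ is compact and carries a pseudo-Anosov foliation, so it has only finitely many singular points and hence only finitely many separatrices. Each separatrix is the limit of only finitely many 2d prongs from the interior (by local analysis of how $\nfol$ extends to $\partial \overline{N_f}$), and each 2d prong contributes only finitely many half leaves to its intersection with $L$. Summing over the finitely many ends of $L$ then yields the finite bound.

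The main obstacle will be rigorously establishing the correspondence in the second paragraph. Precisely matching an escaping periodic half leaf in $L$ with a boundary separatrix requires a careful local analysis of the foliation $\nfol$ near $\partial \overline{N_f}$ and of its intersections with $L$, drawing on Mosher's theory of pseudo-Anosov flows adapted to depth one foliations. An alternative, more internal approach worth attempting would work entirely in $L$: use that near each end $\mathcal{E}$, the foliation $\lfol$ has a crown-like structure inherited from the boundary pseudo-Anosov, and argue directly that only finitely many periodic ``spike'' half leaves arise in this crown.
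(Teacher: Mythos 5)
Your high-level strategy matches the paper's: pass to the compactified mapping torus $\overline{N_f}$, exploit its compactness, and use the correspondence between periodic points of $f$ and closed orbits $\gamma$ of $\varphi$. But the ``heart of the argument'' that you identify---that a periodic half leaf escapes $\mathcal{E}$ if and only if its 2d prong accumulates on $\Sigma_\mathcal{E}$ along a \emph{separatrix} of a boundary pseudo-Anosov foliation---is both unestablished (you flag this yourself as the main obstacle) and mistaken on a structural point. The boundary surfaces $\partial_\pm N$ are compact leaves of the depth one foliation $\mathcal{F}$, not fibers of the pseudo-Anosov suspension, and the induced singular foliation $W^u\cap\partial_+N$ is \emph{not} a pseudo-Anosov foliation: it has closed leaves, and this is exactly the feature the actual proof exploits. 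Moreover, if the boundary foliation were pseudo-Anosov with dense separatrices, a 2d prong accumulating along a separatrix would intersect the depth one leaf $L$ recurrently, giving a recurrent half leaf rather than an escaping one---so the direction of your proposed correspondence does not hold.

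The paper's argument instead cuts the periodic leaf $H_0\subseteq W^u$ containing $\gamma$ along $\partial_+ N$ and the periodic orbits, and uses Lemmas 4.4 and 4.5 of \cite{endperiodic} to classify the component $C$ containing $\gamma$: either $C$ borders $\partial_+ N$ in a single closed curve (type 1), with $\ell$ eventually trapped near it, or $C$ borders $\partial_+ N$ in noncompact leaves (type 2), on all of which $\ell=C\cap L$ accumulates. Escaping forces type 1, assigning to each escaping periodic half leaf a closed leaf of $W^u\cap\partial_+ N$. The proof then verifies the assignment is injective (disjointness of $W^u$ leaves plus uniqueness of closed orbits in homotopy classes for pseudo-Anosov flows) and has finite image. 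This closed-leaf mechanism is the piece your proposal is missing; replacing ``separatrix'' with ``closed leaf of $W^u\cap\partial_+N$'' and carrying out the cut-and-classify analysis of $H_0$ is what makes the argument go through.
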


\begin{proof}
    Let $p\in L$ be a periodic point of $f$, and let $\ell$ be a half leaf at $p$ that escapes an end. Suppose that $\ell$ is a leaf of the unstable foliation; the case that $\ell$ is a leaf of the stable foliation is similar. Since $p$ is periodic, it is contained in a closed orbit $\gamma$ of the flow $\varphi$. Let $H_0$ be the periodic leaf of $W^u$ containing $\gamma$. Cut $H_0$ along $\partial_+ N$ and the periodic orbits of $\varphi$. Then based on the description of these pieces given in Lemma 4.4 of \cite{endperiodic}, exactly one of these components contains $\gamma$; call this one $C$. Since $\gamma$ is one of the boundary components of $C$, we know $C$ is of type (1) or type (2), as in Figure \ref{fig:types} (reproduced from Figure 7 of \cite{endperiodic}); we will show it must be of type (1). 

    \begin{figure}[h]
        \centering
        \includegraphics[width=0.6\linewidth]{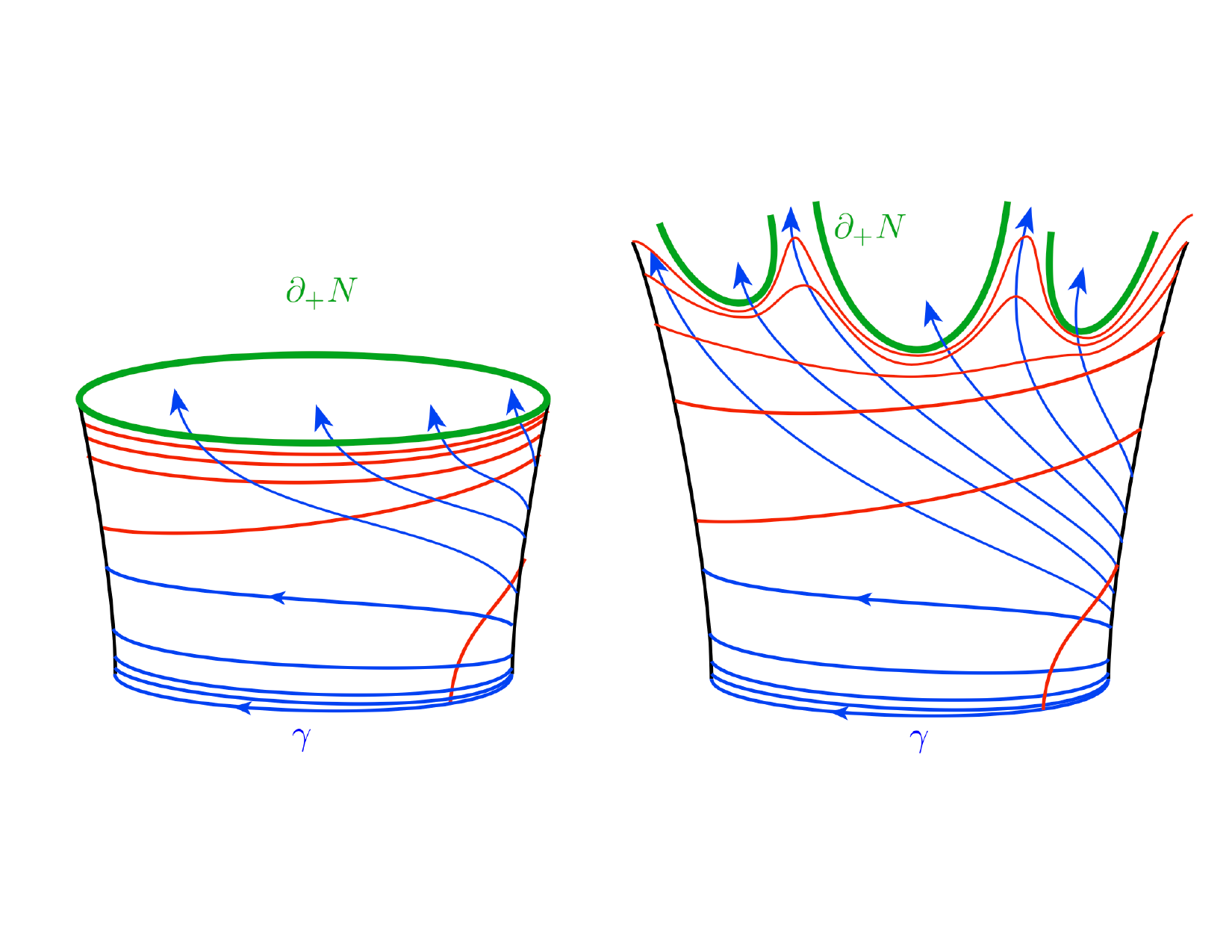}
        \caption{The options for $C$ in Lemma \ref{lem:rec}. Type (1) is on the left, and type (2) is on the right. In blue are flow lines of $\varphi$, and in red is $\ell$.}
        \label{fig:types}
    \end{figure}
    
    Let $U$ be a collar neighborhood of $\partial_+ N$ in $N$. Intersecting with $C$ gives us a collar neighborhood $U\cap C$ of $\partial_+ N\cap C$ in $C$. Considering the way $\ell$ sits inside $C$, we see by Lemma 4.5 of \cite{endperiodic} that either $C$ has a single boundary component from $\partial_+ N$, which is a closed curve, and $\ell$ is eventually contained inside any neighborhood of this boundary component, or $C$ has some number of noncompact boundary components from $\partial_+ N$, and $\ell=C\cap L$ accumulates on all of them. 

     Let $D$ be the component of $\partial_+ N$ corresponding to the end out which $\ell$ escapes. Then for any neighborhood $U'$ of $D$ in $N$, $\ell$ is entirely contained within $U'$ past some point. Thus $\ell$ is eventually contained in $U\cap C$. This implies that $C$ is of type (1) in Lemma 4.5 of \cite{endperiodic}, i.e. $C\cap \partial_+ N$ is a single closed curve. 

     The above argument gives us a map from leaves of $f$ starting at periodic points and escaping an end to closed leaves of the foliation $W^u\cap{\partial_+N}$. We claim that this map has finite image and is injective. 

     To see that the image is finite, note that, $W^u\cap{\partial_+N}$ has finitely many pairwise nonhomotopic closed leaves. Also note that two homotopic closed leaves cannot be in the image of this function, as the two corresponding periodic orbits would then be homotopic, and no two distinct closed orbits of a pseudo-Anosov suspension flow are homotopic. This follows from the fact that periodic points of pseudo-Anosovs are unique in their Nielsen classes, see Lemma 3.1 of \cite{handelthurston}.

     To see injectivity, suppose leaves $\ell$ and $\ell'$ from periodic points $p$ and $p'$ in leaves $H_0$ and $H_0'$ were sent to the same closed curve $\alpha$ in $\partial_+N$. Then since leaves of $W^u$ are disjoint, $H_0=H_0'$, and again we'd have two homotopic closed orbits, a contradiction.
    \end{proof}

\subsection{Recurrent fixed points}\label{subsec:recurrent}

Now we'll focus our attention on recurrent fixed points of $f$. Let $\widetilde{f}\sm{\widetilde{L}}$ be a lift of $f$, and $z\in\hb$ be a recurrent fixed point of $\widetilde{f}$ with associated periodic half leaf $\ell\subseteq \mathcal{W}^{u/s}$ and lift $\widetilde{\ell}$ of $\ell$. Let $\alpha\subseteq L$ be a closed geodesic that intersects $\ell$ infinitely often. Each intersection of $\ell$ with $\alpha$ defines a lift of $\alpha$ intersecting $\widetilde{\ell}$; label these lifts $\{\widetilde{\alpha}_i\}_{i\in\N}$, ordered from the perspective of moving along $\ell$. The following lemma is a modification of Lemma 1.1 from \cite{handelthurston} and gives us a neighborhood basis for $z$ using $\{\widetilde{\alpha}_i\}_{i\in\N}$.

\begin{lemma}\label{lem:nbd}
    Suppose $z\in\partial_\infty\Hyp^2$ is recurrent. Let $\ell,$ $\alpha$, and $\{\widetilde{\alpha}_i\}_{i\in\N}$ be as above. Let $N_i$ be the component of $\overline{\Hyp^2}\setminus \widetilde{\alpha}_i$ containing $z$. Then $\{N_i\}_{i\in\N}$ forms a neighborhood basis for $z$.
\end{lemma}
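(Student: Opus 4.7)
I aim to verify both that each $N_i$ is an open neighborhood of $z$ (immediate, since $\widetilde{\alpha}_i$ is closed in $\overline{\Hyp^2}$, making $\overline{\Hyp^2}\setminus\widetilde{\alpha}_i$ open and $N_i$ an open component containing $z$) and that every open $U\ni z$ contains some $N_i$. For the second part I plan to establish two intermediate claims --- nesting $N_{i+1}\subseteq N_i$ and $\bigcap_i\overline{N_i}=\{z\}$ --- and then finish by a standard compactness argument: if no $N_i\subseteq U$, pick $y_i\in N_i\setminus U$ and extract $y_i\to y\in\overline{\Hyp^2}\setminus U$; by nesting $y\in\bigcap_i\overline{N_i}=\{z\}\subseteq U$, a contradiction.

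For the nesting I assume $\alpha$ is simple, so that the lifts $\widetilde{\alpha}_i$ are pairwise disjoint (if needed, one may replace $\alpha$ by a simple closed geodesic meeting the compact region in which $\ell$ recurs). Moving along $\widetilde{\ell}$ past $p_i$ we enter $N_i$ and later cross $\widetilde{\alpha}_{i+1}$ at $p_{i+1}\in N_i$, so the connected set $\widetilde{\alpha}_{i+1}$ (disjoint from $\widetilde{\alpha}_i$ but meeting $N_i$) lies in $N_i$; dually $\widetilde{\alpha}_i$ lies on the non-$z$-side of $\widetilde{\alpha}_{i+1}$, and hence $N_{i+1}$ --- the component of $\overline{\Hyp^2}\setminus\widetilde{\alpha}_{i+1}$ containing $z\in N_i$ and avoiding $\widetilde{\alpha}_i$ --- is contained in $N_i$.

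The heart of the argument is $\bigcap_i\overline{N_i}=\{z\}$. Suppose $y\in\bigcap_i\overline{N_i}$ with $y\neq z$. Because $\widetilde{\ell}$ is uniformly quasi-geodesic with endpoint $z$ (Proposition 4.6 of the cited work), the crossings $p_i$ leave every compact subset of $\Hyp^2$ and tend to $z$ in $\overline{\Hyp^2}$. The condition that $y$ lies on the $z$-side of every $\widetilde{\alpha}_i$, combined with $p_i\to z$, should force, after passing to a subsequence, that the geodesics $\widetilde{\alpha}_i$ converge in the Hausdorff topology on $\overline{\Hyp^2}$ to a bi-infinite geodesic $\widetilde{\alpha}_\infty$ whose ideal endpoints are not both equal to $z$. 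Granting this, $\widetilde{\alpha}_\infty$ contains a compact hyperbolic segment $K\subseteq\Hyp^2$ of positive length, and by the Hausdorff convergence a fixed compact $\epsilon$-neighborhood of $K$ is met by $\widetilde{\alpha}_i$ for all large $i$.

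The final contradiction uses that $\alpha$ is compact in $L$: since $\pi\colon\Hyp^2\to L$ is a covering map, only finitely many connected components of $\pi^{-1}(\alpha)$ can meet any compact subset of $\Hyp^2$, so only finitely many $\widetilde{\alpha}_i$ can intersect the compact $\epsilon$-neighborhood of $K$, contradicting the previous paragraph. The main technical obstacle is the subsequence extraction in the third paragraph, which must rule out the degenerate possibility that both ideal endpoints of $\widetilde{\alpha}_i$ converge to $z$ from the same side of $\partial_\infty\Hyp^2$ (in which case the limit collapses to $\{z\}$ and no compact segment $K$ appears). This is where one expects to use the specific structure of the $\widetilde{\alpha}_i$ as translates of a single closed geodesic under $\pi_1(L)$, together with the uniform quasi-geodesic property of $\widetilde{\ell}$, to force at least one endpoint of $\widetilde{\alpha}_i$ to remain bounded away from $z$ and thereby produce a non-degenerate limit geodesic.
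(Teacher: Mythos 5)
Your proposal takes a genuinely different route through the key step. The paper's proof is very short: since the $\widetilde{\alpha}_i$ are distinct lifts of a \emph{compact} closed geodesic, there is a uniform positive lower bound on the distance between consecutive lifts; combined with the nesting $N_{i+1}\subset N_i$, this forces $d(y,\widetilde{\alpha}_1)=\infty$ for any $y\in\Hyp^2\cap\bigcap_i N_i$, so that intersection is empty, and then since $z\in N_i$ for all $i$ and any second ideal point $w\in\bigcap_i N_i$ would, by convexity of half-planes, put the geodesic $[w,z]$ into $\Hyp^2\cap\bigcap_i N_i$, one gets $\bigcap_i N_i=\{z\}$. You instead argue by contradiction via a Hausdorff-limit of the geodesics $\widetilde{\alpha}_i$ and local finiteness of $\pi^{-1}(\alpha)$ on compact sets. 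Both arguments ultimately rest on the same underlying fact (proper discontinuity of the deck group acting near the compact geodesic $\alpha$), but the paper's version is more elementary and avoids the subsequence analysis entirely.

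Regarding the gap you flag: the ``degenerate possibility'' you worry about is not actually an obstruction. Since $z$ lies in the \emph{open} set $N_i$, the point $z$ is in the interior of the circular arc $\overline{N_i}\cap\partial_\infty\Hyp^2$, so the two ideal endpoints of $\widetilde{\alpha}_i$ lie on opposite sides of $z$; they cannot both approach $z$ ``from the same side.'' If both endpoints do converge to $z$ (necessarily from opposite sides), then $\widetilde{\alpha}_i$ is a geodesic with both endpoints near $z$, hence the half-plane $N_i$ is a small neighborhood of $z$ in $\overline{\Hyp^2}$, and $\bigcap_i\overline{N_i}=\{z\}$ falls out immediately---this is the conclusion you want, not a bad case. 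If instead some subsequence of endpoints stays bounded away from $z$, your Hausdorff-limit argument applies and gives the contradiction via local finiteness. So your proof can be completed by splitting into these two cases; as written, though, you leave the case analysis as a hoped-for fact rather than establishing it, so the argument is incomplete. One further small point: your nesting step assumes $\alpha$ simple so that distinct lifts are disjoint; the paper makes the same implicit assumption (its ``uniform lower bound'' on the distance between consecutive lifts likewise needs the lifts not to cross), so you are in good company, but if you want to be careful you should say why $\alpha$ can be taken simple, not merely that ``one may replace $\alpha$.''
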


\begin{proof}
    There is a uniform positive lower bound to the distance between two consecutive lifts $\widetilde{\alpha}_i$ and $\widetilde{\alpha}_{i+1}$ since they are lifts of the same closed geodesic $\alpha$, implying $\Hyp^2\cap\bigcap_i N_i=\varnothing$. Since $\widetilde{\ell}\cap\widetilde{\alpha}_i\rightarrow z$ as $i\rightarrow\infty$, we must have $\bigcap_i N_i=z$.
\end{proof}

Our next two lemmas are inspired by parts of the proof of Lemma 3.1 in \cite{handelthurston}. In their setting they are studying compact surfaces; we need to be careful to do everything in a lift of a compact subsurface to generalize these techniques to our setting. 

The goal of the following lemma is roughly to show that close enough to $z\in\hb$, $\widetilde{f}$ moves points far. In Lemma \ref{lem:index}, we will find a homotopy $H\colon \overline{\Hyp^2}\times I\rightarrow \overline{\Hyp^2}$ taking $\widetilde{g}$ to a new map $g'$. To show that $g'$ has the same index as $\widetilde{g}$ at $z$, we need to show that in a neighborhood of $z$, each $H_t$ has no fixed points other than $z$. We do this by showing via Lemma \ref{lem:K} that $H_t$ first moves points far, and then moves them back only a little, so it can't create any new fixed points.

\begin{lemma}\label{lem:K}
    Given any $K>0$ and compact subsurface $S\subseteq L$, there exists an $i$ such that for all $p\in N_i\cap \pi^{-1}(S)$, $d(p,f(p))>K$.
\end{lemma}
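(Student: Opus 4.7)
My plan is to argue by contradiction. Suppose for some $K>0$ and compact $S$ there is a sequence $i_n \to \infty$ and points $p_n \in N_{i_n}\cap \pi^{-1}(S)$ with $d(p_n,\widetilde{f}(p_n)) \le K$. The strategy is to leverage the equivariance of $\widetilde{f}$ under deck transformations, together with proper discontinuity of $\pi_1(L)$, to force the $p_n$ into a single compact subset of $\Hyp^2$, and then derive a contradiction using Lemma \ref{lem:nbd}.

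To implement this, fix a compact $F\subseteq \Hyp^2$ with $\pi(F)=S$ and write each $p_n = \gamma_n s_n$ with $\gamma_n\in \pi_1(L)$ and $s_n\in F$. Because $\widetilde{f}$ descends to $f$, the assignment $\phi\colon\gamma\mapsto \widetilde{f}\gamma\widetilde{f}^{-1}$ is an automorphism of $\pi_1(L)$, and so $\widetilde{f}(p_n) = \phi(\gamma_n)\widetilde{f}(s_n)$. The bound $d(p_n,\widetilde{f}(p_n))\le K$ then rewrites as $d(s_n,\gamma_n^{-1}\phi(\gamma_n)\widetilde{f}(s_n)) \le K$. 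Since $F$ and $\widetilde{f}(F)$ are compact, the deck transformation $\gamma_n^{-1}\phi(\gamma_n)$ sends some point of $\widetilde{f}(F)$ into the $K$-neighborhood of $F$. By proper discontinuity only finitely many deck transformations can do this, so after passing to a subsequence we may assume $\gamma_n^{-1}\phi(\gamma_n) = \gamma_*$ is constant.

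From $\phi(\gamma_n) = \gamma_n\gamma_*$ one computes $\phi(\gamma_n\gamma_m^{-1}) = \gamma_n\gamma_*\gamma_*^{-1}\gamma_m^{-1} = \gamma_n\gamma_m^{-1}$, i.e.\ $\widetilde{f}$ commutes with each $\gamma_n\gamma_m^{-1}$. Since $f$ is spA and hence atoroidal, Lemma \ref{lem:commute} forces $\gamma_n\gamma_m^{-1} = 1$, so all $\gamma_n$ coincide with a single $\gamma$, and consequently $p_n \in \gamma\cdot F$, a fixed compact subset of $\Hyp^2$. Finally, since $\{N_i\}$ is a nested neighborhood basis of $z\in \hb$ in $\overline{\Hyp^2}$ (Lemma \ref{lem:nbd}), any compact $C\subseteq \Hyp^2$ (which in particular does not contain the boundary point $z$) satisfies $N_i\cap C = \varnothing$ for all sufficiently large $i$; applying this to $C=\gamma\cdot F$ contradicts $p_n\in N_{i_n}$.

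The main subtlety is the bookkeeping in the proper-discontinuity step: one must verify that the right deck combination $\gamma_n^{-1}\phi(\gamma_n)$ (rather than some other word in $\gamma_n$ and $\phi(\gamma_n)$) is the one that gets pinned down, so that the subsequent computation lands exactly on an element commuting with $\widetilde{f}$ and Lemma \ref{lem:commute} can be invoked. Once this is arranged, everything else is a mild adaptation of Handel--Thurston's compact-surface argument, with the compact subsurface $S$ and its fundamental domain $F$ playing the role of the entire surface in their setting.
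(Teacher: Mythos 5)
Your proof is correct and follows essentially the same strategy as the paper: normalize the escaping sequence $p_n$ to a compact piece of the universal cover, invoke proper discontinuity (the paper phrases it as discreteness of the orbit of a single lifted point, you phrase it as finiteness of deck elements moving $\widetilde{f}(F)$ near $F$) to pin down the conjugating element, and then derive a contradiction with Lemma~\ref{lem:commute}. The only cosmetic difference is that the paper arranges for the $\gamma_i$ to be a priori distinct, so Lemma~\ref{lem:commute} gives the contradiction immediately, whereas you allow the $\gamma_n$ to coincide and close the loop with the neighborhood-basis argument from Lemma~\ref{lem:nbd}; both are fine.
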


\begin{proof}
    Suppose towards contradiction that we have a sequence of points $p_i\in\pi^{-1}(S)$ such that $p_i\rightarrow z$ and $d(p_i,f(p_i))\leq K$ for some $K>0$. Since $S$ is compact, each $p_i$ is a uniformly bounded distance away from a lift of a single point $q_0\in S$, so we may assume up to changing the constants that each $p_i$ is a different lift of  $q_0$. Thus, there are $\gamma_i\in\pi_1(L)$ such that $p_i=\gamma_i(p_1)$. Then 
    \begin{align*}
        K&\geq d(p_i,f(p_i))\\
        &=d(\gamma_i(p_1),f\gamma_i(p_1))\\
        &=d(p_1,\gamma_i^{-1}f\gamma_i(p_1)). 
    \end{align*}
    Since the set of images of $p_1$ under all lifts of $f$ is discrete (they are all lifts of the point $f(q_0)$), we must have $\gamma_i^{-1}f\gamma_i=\gamma_j^{-1}f\gamma_j$ for some $i\neq j$. Then $\gamma_j\gamma_i^{-1}f=f\gamma_j\gamma_i^{-1}$, which contradicts the fact that $f$ commutes with no nontrivial deck transformations by Lemma \ref{lem:commute}. Thus, $z$ has an open neighborhood $N$ such that every point in $N\cap \pi^{-1}(S)$ is moved at least $K$ by $f$. By Lemma \ref{lem:nbd}, we can take $N$ to be $N_i$ for some $i$.
\end{proof}

\begin{lemma}\label{lem:index}
    Let $p$ be a point of period $n$ of $f$ with no escaping half leaves, let $\widetilde{f}$ be a lift of $f$ to $\Hyp^2$ and $\widetilde{p}$ a compatible lift of $p$. Suppose $g\simeq f$, and let $\widetilde{g}$ be a compatible lift of $g$. If $\widetilde{g}$ has finitely many fixed points, $\widetilde{f}$ acts on $\partial_\infty\Hyp^2$ with multi sink-source dynamics, and $z\in\partial_\infty\Hyp^2$ is fixed by $\widetilde{f}$, then $I(\widetilde{g},z)=\frac{1}{2}$.
\end{lemma}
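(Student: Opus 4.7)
The strategy is to homotope $\widetilde{g}$ on a neighborhood of $z$ to a hyperbolic translation $\rho$ with attracting axis ending at $z$, through maps with no fixed points in the neighborhood other than $z$ itself. Since such a $\rho$ is a standard boundary sink with fixed-point index $\frac{1}{2}$ (via the double-map construction), and the index is homotopy-invariant when no new fixed points appear, this yields $I(\widetilde{g},z)=\frac{1}{2}$. Up to replacing $\widetilde{g}$ by $\widetilde{g}^{-1}$ (which has the same local index at the boundary fixed point $z$ since both give the same $+1$ on the doubled sphere), we may assume $z$ is attracting for $\widetilde{f}|_{\hb}$.

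The first step is to set up neighborhoods via Lemma~\ref{lem:nbd}. Let $\widetilde{\ell}$ be the lift of the recurrent half leaf at $\widetilde{p}$ with endpoint $z$, let $\alpha$ be a closed geodesic hit infinitely often by $\pi(\widetilde{\ell})$, and let $\{\widetilde{\alpha}_i\}$ and $\{N_i\}$ be as in that lemma. Using that $\widetilde{g}$ has finitely many fixed points, choose $i_0$ so that $z$ is the only fixed point of $\widetilde{g}$ in $\overline{N_{i_0}}$. Let $S\subset L$ be a compact subsurface containing $\alpha$ in its interior. Continuity of the homotopy $H\colon L\times I\to L$ from $f$ to $g$ on the compact set $S\times I$ provides a uniform constant $D$ with $d(\widetilde{f}(x),\widetilde{g}(x))\leq D$ for every $x\in\pi^{-1}(S)$. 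Applying Lemma~\ref{lem:K} with constant $2D+1$ then yields $i_1\geq i_0$ such that $d(x,\widetilde{g}(x))>D+1$ for every $x\in N_{i_1}\cap\pi^{-1}(S)$.

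The second step is to build the homotopy. Take $\rho$ to be a hyperbolic isometry of $\Hyp^2$ with attracting fixed point $z$ and translation length $\epsilon\ll D$, so that $I(\rho,z)=\frac{1}{2}$ by inspection. Construct a homotopy $\{H_t\}_{t\in[0,1]}$ on $\overline{N_{i_0}}$ from $H_0=\widetilde{g}$ to a map $H_1$ that agrees with $\rho$ on a smaller neighborhood $V\subseteq N_{i_1}$ of $z$ and with $\widetilde{g}$ outside $N_{i_1}$, interpolating along hyperbolic geodesics in the target between $\widetilde{g}(x)$ and the model image on the transition annulus $N_{i_1}\setminus V$. On lifts of $\alpha$ in $N_{i_1}$, the displacement estimate keeps each $H_t(x)$ at distance greater than $D+1-\epsilon>0$ from $x$, so no fixed points can appear on these lifts. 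If $i_1$ is taken large enough, $N_{i_1}$ is covered by a controlled collection of deck translates of a fixed lift of $S$, and the displacement bound propagates to the whole of $N_{i_1}$, making $H_t$ fixed-point-free throughout $\overline{N_{i_0}}\setminus\{z\}$ for every $t$.

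The main obstacle is the global covering argument in the last sentence: since $L$ is of infinite type, the projection of $N_{i_1}$ to $L$ is generally not compact, so ensuring the displacement bound extends to every point of $N_{i_1}$ requires either enlarging $S$ adaptively or exploiting that $\widetilde{\ell}$ is uniformly quasi-geodesic so that $N_{i_1}$ lies in a bounded neighborhood of the lifts $\widetilde{\alpha}_j$ for $j\geq i_1$. Once this technicality is handled, homotopy invariance of the fixed-point index (applied on the compact set $\overline{N_{i_0}}$) gives $I(\widetilde{g},z)=I(\rho,z)=\frac{1}{2}$.
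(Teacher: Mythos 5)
Your overall strategy---homotope $\widetilde{g}$ near $z$ through maps with no new fixed points to a map that is visibly a boundary sink, then invoke homotopy invariance of the index---matches the paper. But the specific construction you propose has a gap that you correctly flag, and your suggested patches do not close it. The transition annulus $N_{i_1}\setminus V$ does not have compact image in $L$, so the displacement estimate $d(x,\widetilde{g}(x))>D+1$, which you obtain only on $N_{i_1}\cap\pi^{-1}(S)$, gives you no control at points of the annulus that miss $\pi^{-1}(S)$, and your interpolation toward $\rho$ can create fixed points there. Your first fix (enlarging $S$ adaptively) cannot terminate, since $N_{i_1}$ has noncompact image. Your second fix is false: $N_{i_1}$ is a hyperbolic half-plane, and the geodesics $\widetilde{\alpha}_j$, being pairwise disjoint and all crossing the quasi-geodesic $\widetilde{\ell}$, diverge from one another away from $\widetilde{\ell}$, so there are points of $N_{i_1}$ arbitrarily far from all of them.

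The paper sidesteps the difficulty by not interpolating over the whole annulus. Instead of replacing $\widetilde{g}$ by a model isometry $\rho$, it post-composes $\widetilde{g}$ with a lift $\widetilde{H}_t$ of a compactly supported isotopy $H_t$ of $L$ carrying $g(\alpha)$ to its geodesic representative $g_*(\alpha)$, and, crucially, chooses $\widetilde{H}_t$ to be the identity outside a \emph{single} connected lift $\widetilde{S}$ of the support $S$. On $N_2\cap\widetilde{S}$, the Lemma~\ref{lem:K} bound (with constant $3C$ against a $C$-bounded isotopy) rules out new fixed points. Off $\widetilde{S}$, no displacement bound is needed at all: a point $x\in N_2\setminus\widetilde{S}$ either has $\widetilde{g}(x)\notin\widetilde{S}$, so that $\widetilde{H}_t\circ\widetilde{g}(x)=\widetilde{g}(x)\neq x$ by the choice of $N_i$, or has $\widetilde{g}(x)\in\widetilde{S}$, so that $\widetilde{H}_t\circ\widetilde{g}(x)\in\widetilde{S}$ while $x\notin\widetilde{S}$. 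The resulting $g'=\widetilde{H}_1\circ\widetilde{g}$ sends $\widetilde{\alpha}_2$ to the geodesic $\widetilde{g}_*(\widetilde{\alpha}_2)$, whose endpoints lie in $U_2\cap\hb$ because $z$ attracts on the circle at infinity; this forces $g'(N_2)\subsetneq N_2$ directly, so $z$ is a sink for $g'$ on $\overline{\Hyp^2}$ and $I(g',z)=\tfrac{1}{2}$, with no need to identify $g'$ with a specific isometry. To repair your argument you would need this localization: support the change to $\widetilde{g}$ inside one lift of a compact subsurface, so that the region where your displacement bound fails is exactly the region where you are not changing the map.
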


\begin{proof}
Since $\widetilde{f}$ acts on $\partial_\infty\Hyp^2$ with multi sink-source dynamics, $z$ is isolated as a fixed point of $\widetilde{f}$ and is either a sink or a source; suppose it is a sink. We know $\widetilde{f}$ and $\widetilde{g}$ agree on $\partial_\infty\Hyp^2$, so $z$ is a sink for $\widetilde{g}$ as well. Note that this is with respect to the action of $\widetilde{g}$ on $\partial_\infty\Hyp^2$, not $\overline{\Hyp^2}$. Our strategy is to find a neighborhood $U_2$ of $z$ and a map $g'\sm{L}$ homotopic to $\widetilde{g}$ through maps with no fixed points other than $z$ in $U_2$  such that $g'(U_2)\subsetneq U_2$.

 Let $\gamma$ be a geodesic ray in $\Hyp^2$ going to $z$, and let $\alpha$ be a closed geodesic in $L$ that $\gamma$ crosses infinitely many times. Let $H\colon I\times L\rightarrow L$ be a homotopy such that $H_0=id_L$ and $H_1(g(\alpha))=g_*(\alpha)$, the geodesic representative of $g(\alpha)$. This homotopy can be taken to be supported on some compact subsurface $S\subseteq L$. Since $H$ is compactly supported, no point on $L$ is moved more than some constant $C$ during the homotopy, i.e. the path traced out by any point during the homotopy has length less than $C$.

 Since $\widetilde{g}$ has finitely many fixed points, we can find a neighborhood $N_i$ as in Lemma \ref{lem:nbd} such that $N_i\setminus z$ contains no fixed points of $\widetilde{g}$ and such that $\widetilde{g}$ moves points in $\partial_\infty\Hyp^2\cap N_i$  towards $z$. Let $U_1=N_i$. Now pick $N_j$ with $j>i$ such that $\widetilde{g}(N_j)\subseteq U_1$, and all points in $N_j\cap\pi^{-1}(S)$ are moved more than $3C$ by $\widetilde{g}$, which is possible by Lemma \ref{lem:K}. Let $U_2=N_j$. Relabel the set $\{\widetilde{\alpha}_i\}_i$ so $\widetilde{\alpha}_1$ and $\widetilde{\alpha}_2$ border $U_1$ and $U_2$ respectively. See Figure \ref{fig:index}. 

 \begin{figure}[h]
     \centering
     \includegraphics[width=0.45\linewidth]{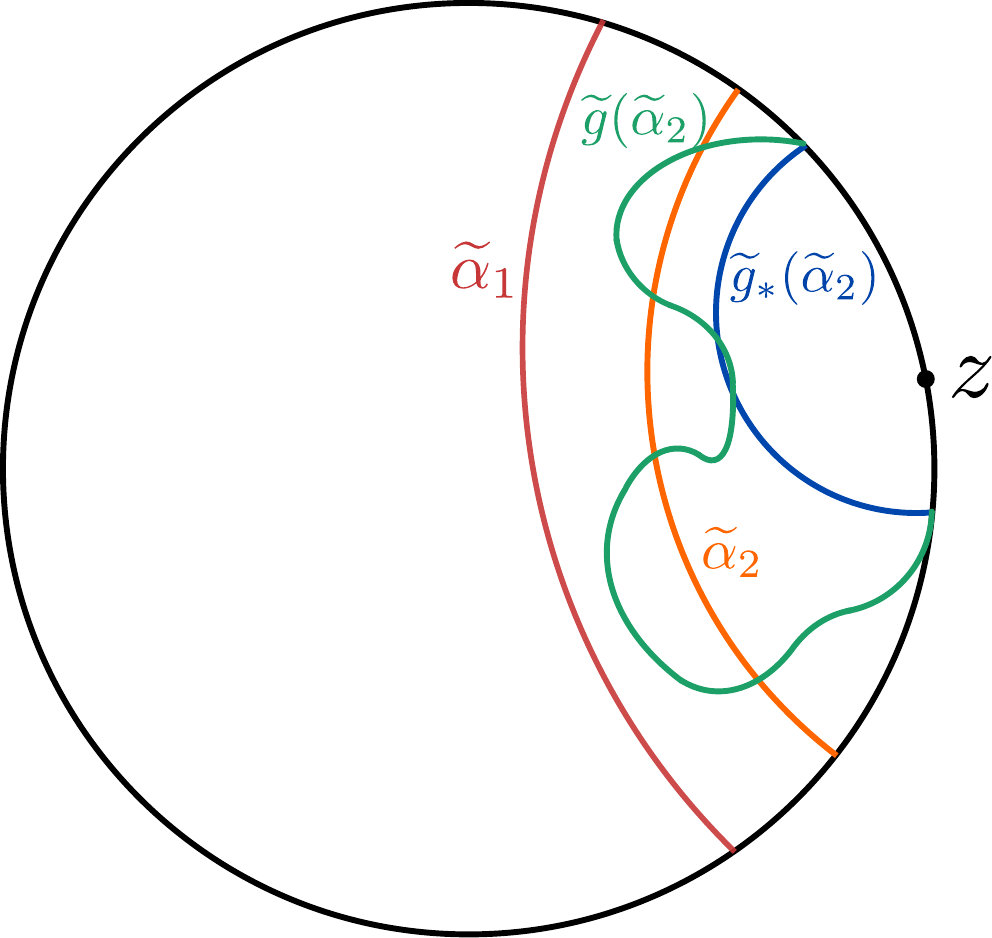}
     \caption{The setup in Lemma \ref{lem:index}.}
     \label{fig:index}
 \end{figure}

 Let $\widetilde{S}$ be the connected component of $\pi^{-1}(S)$ containing $\widetilde{\alpha}_2$, and define a homotopy $\widetilde{H}\colon I\times\overline{\Hyp^2}\rightarrow\overline{\Hyp^2}$ as follows. On $\overline{\Hyp^2}\setminus \widetilde{S}$, let $\widetilde{H}(t,x)=x$. On $\widetilde{S}$, let $\widetilde{H}$ be the unique lift $H'$ of $H\mid_{I\times S}$ such that ${H'}_0=id_{\widetilde{S}}$. Since ${H'}_0\mid_{\partial\widetilde{S}}=id_{\partial\widetilde{S}}$, and ${H'}_t$ is a lift of $id_{\partial S}$ on each component of $\partial\widetilde{S}$, we must have that $H'_t\mid_{\partial \widetilde{S}}=id_{\partial\widetilde{S}}$, as distinct components of $\partial\widetilde{S}$ are positive distance apart, and lifts of $id_{\partial S}$ on a single component of $\partial\widetilde{S}$ differ by hyperbolic isometries of positive translation length. Thus, we have that $\widetilde{H}$ is a well defined homotopy on all of $\overline{\Hyp^2}$.

 Since $\pi\circ\widetilde{H}_1\circ\widetilde{g}(\widetilde{\alpha}_2)=g_{*}(\alpha)$ and $\widetilde{H}_1\circ\widetilde{g}(\widetilde{\alpha}_2)\subset \widetilde{S}$, we must have $\widetilde{H}_1\circ\widetilde{g}(\widetilde{\alpha}_2)=\widetilde{g}_{*}(\widetilde{\alpha}_2)$, the geodesic representative of $\widetilde{g}(\widetilde{\alpha}_2)$. Let $g'=\widetilde{H}_1\circ \widetilde{g}$.  Since $\widetilde{g}$ moves points in $N_2\cap\widetilde{S}$ at least $3C$, and $\widetilde{H}_t$ moves points in $\widetilde{S}$ no more than $C$ for all $t\in[0,1]$, $\widetilde{H}_t\circ \widetilde{g}$ has no fixed points in $N_2\cap\widetilde{S}$ for all $t$. Since $\widetilde{g}$ has no fixed points in $N_2\setminus{z}$ and $\widetilde{H}_t$ acts by the identity on $N_2\setminus\widetilde{S}$, we have that $\widetilde{H}_t\circ \widetilde{g}$ has no fixed points on all of $N_2\setminus{z}$, but does fix $z$. Thus, $I(\widetilde{g},z)=I(g',z)$. 
    
 To compute $I(g',z)$, note that the endpoints of $\widetilde{g}_*(\widetilde{\alpha_2})$ are in $\partial_\infty\Hyp^2\cap U_2$, so $g'(N_2)\subsetneq N_2$, and $z$ is a sink for the action of $g'$ on all of $\overline{\Hyp^2}$. Thus, $I(g',z)=\frac{1}{2}$. 
\end{proof}

\begin{theorem}\label{thm:fewer}
    Let $f\sm{L}$ be spA. For some $N\in \N$, every $g\simeq f$ has no fewer points of period $n$ than $f$ for all $n> N$.
\end{theorem}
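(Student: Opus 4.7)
The plan is to take $N$ to be the largest period of any periodic point of $f$ lying on an escaping half leaf; this $N$ is finite by Lemma~\ref{lem:rec}. For each $n>N$, I construct an injection $\Fix(f^n)\hookrightarrow\Fix(g^n)$ via Nielsen equivalence: given $p\in\Fix(f^n)$ with lift $\widetilde{p}\in\Hyp^2$ and associated lift $\widetilde{f}^n$ of $f^n$ fixing $\widetilde{p}$, I produce a fixed point of the compatible lift $\widetilde{g}^n$ in $\Hyp^2$. Its projection to $L$ is a period-$n$ point of $g$ Nielsen-equivalent to $p$, and distinct Nielsen classes of $f^n$-fixed points yield distinct $g$-points. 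If $\widetilde{g}^n$ has infinitely many fixed points the Nielsen class is already non-empty, so I may assume $\widetilde{g}^n$ has only finitely many fixed points in $\Hyp^2$.

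Because $p$ has no escaping half leaves, Theorem 4.1 of \cite{endperiodic} supplies a power $\widetilde{f}^{nk}$ acting on $\hb$ with multi sink-source dynamics, whose fixed points are the endpoints of lifts of the recurrent half leaves at $\widetilde{p}$. Every fixed point $z\in\hb$ of $\widetilde{f}^n$ is in particular fixed by $\widetilde{f}^{nk}$, hence is an isolated sink or source of that power; and since $\widetilde{f}^n$ is an orientation-preserving self-homeomorphism of the circle $\hb$, the dynamics transfers — a point approached monotonically by iterates of $(\widetilde{f}^n)^k$ must be approached monotonically by $\widetilde{f}^n$ itself — so $z$ is an isolated sink or source of $\widetilde{f}^n$. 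Inspecting the proof of Lemma~\ref{lem:index}, the multi sink-source hypothesis is invoked only to establish that $z$ is an isolated sink or source on $\hb$; the same proof now yields $I(\widetilde{g}^n,z)=\tfrac{1}{2}$.

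Applying the Lefschetz-Hopf theorem on the closed disk $\overline{\Hyp^2}$ to the orientation-preserving homeomorphism $\widetilde{g}^n$ (Lefschetz number $1$),
\[\sum_{w\in\Fix(\widetilde{g}^n)}I(\widetilde{g}^n,w)=1.\]
If the boundary contribution exceeds $1$ — which occurs as soon as at least three of the multi sink-source endpoints of $\widetilde{f}^{nk}$ are fixed by $\widetilde{f}^n$ itself — then $\widetilde{g}^n$ must have an interior fixed point of negative index, completing the construction.

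The hardest step will be ensuring enough $\widetilde{f}^n$-fixed points on $\hb$: a priori $\widetilde{f}^n$ only permutes the multi sink-source endpoints of its power $\widetilde{f}^{nk}$, and this permutation — necessarily a cyclic rotation since it preserves orientation and the sink/source alternation — could fix none of them. At regular periodic points this never occurs, because $\widetilde{f}^n$ fixes all four half-leaf endpoints directly, so the delicate case is confined to the finitely many singular orbits of the spA foliation. I expect to resolve this by a uniform choice of the power $k$ over these singular orbits (with the resulting constant absorbed into $N$), or failing that by running the Lefschetz-Hopf argument on a controlled power of $\widetilde{g}^n$ and then tracing the interior periodic orbit it produces back to a genuine period-$n$ fixed point of $g$ in the intended Nielsen class.
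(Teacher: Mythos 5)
Your overall strategy matches the paper's: take $N$ to be the maximal period of a periodic point with an escaping half leaf (finite by Lemma~\ref{lem:rec}), and for each $n>N$ produce, for every period-$n$ point of $f$, a Nielsen-equivalent period-$n$ point of $g$, by using Lemma~\ref{lem:index} together with the Lefschetz--Hopf theorem to find a fixed point of the compatible lift $\widetilde g^n$ in $\Hyp^2$. The difficulty is in your last paragraph, where you have the hard and easy cases inverted.

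What you flag as the ``hardest step'' --- that $\widetilde f^n$ might fix none of the multi sink-source endpoints of its power $\widetilde f^{nk}$ --- is in fact the easy case. Compatible lifts agree on $\hb$, so if $\widetilde f^n$ has no boundary fixed points then neither does $\widetilde g^n$; every fixed point of $\widetilde g^n$ then lies in $\Hyp^2$, and since the Lefschetz number of $\widetilde g^n$ on the closed disk $\overline{\Hyp^2}$ equals $1\ne 0$, an interior fixed point exists with no index computation on $\hb$ needed at all. Moreover, because (as you observe) $\widetilde f^n$ acts on the finite, cyclically ordered set of sink-source endpoints of $\widetilde f^{nk}$ by a cyclic rotation preserving the sink/source alternation, the only two possibilities are that all of these endpoints are fixed or that none are; there is no intermediate case to worry about. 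This is precisely the paper's dichotomy, on whether the smallest $k$ such that $\widetilde f^{nk}$ has multi sink-source dynamics is $1$ or is $\ge 2$: in the first case Lemma~\ref{lem:index} gives each boundary fixed point index $\tfrac12$ and the doubled Lefschetz count forces an interior fixed point; in the second case the argument above applies directly. Your attempt to rule out the ``none fixed'' case is therefore misdirected, and the claim you use to do so --- that at a nonsingular periodic point $\widetilde f^n$ always fixes all four half-leaf endpoints --- is false: the first-return map at a nonsingular closed orbit of the suspension flow can have local model $(x,y)\mapsto(-\lambda x,-\lambda^{-1}y)$, which swaps the two unstable and the two stable half leaves, so $k=2$ there. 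Lastly, you state without argument that the interior fixed point of $\widetilde g^n$ you produce projects to a point of period exactly $n$ rather than a proper divisor; the paper supplies this via the argument of Theorem~14.20 of \cite{primer}.
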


\begin{proof}
    By Lemma \ref{lem:rec}, only finitely many periodic points of $f$ have half leaves that escape an end. Let $N$ be the highest period of such a point. Now let $p$ be any periodic point of $f$ of period $n>N$. Our strategy is to find a Nielsen equivalent point $q$ that has period $n$ under $g$. Since distinct points of period $n$ of $f$ aren't Nielsen equivalent by Lemma 4.8 of \cite{endperiodic}, this will define an injective map from points of period $n$ of $f$ to points of period $n$ of $g$.

    So, let $\widetilde{f}$ be a lift of $f$, let $\widetilde{p}$ be a compatible lift of $p$, and $\widetilde{g}$ a compatible lift of $g$, so that $\widetilde{f}$ and $\widetilde{g}$ agree on $\partial_\infty\Hyp^2$. By Theorem 4.1 of \cite{endperiodic}, we know that for some smallest $k$, $\widetilde{f}^k$ acts on $\partial_\infty\Hyp^2$ with multi sink-source dynamics. In the case that $k\geq2$, the argument in the proof of Theorem 6.1 from \cite{endperiodic} carries over. In particular, $\widetilde{f}$ and $\widetilde{g}$ have no fixed points on $\hb$, so by the Lefschetz-Hopf theorem applied to the disk $\overline{\Hyp^2}$, $\widetilde{g}$ must have a fixed point in $\Hyp^2$.
    
    Now assume $k=1$.  By Lemma \ref{lem:index}, all the fixed points of $\widetilde{g}$ on $\partial_\infty\Hyp^2$ have index $\frac{1}{2}$. Since $\chi(S^2)=2$, the double of $\widetilde{g}$ must have a fixed point of negative index, so $\widetilde{g}$ has some fixed point $\widetilde{q}$ in $\Hyp^2$. All that remains is to show $\widetilde{q}$ has period exactly $n$ under $\widetilde{g}$.
    This same situation occurs in the corresponding proof for pseudo-Anosovs in the finite-type setting, and the same argument used there as in Theorem 14.20 of \cite{primer} works here.
\end{proof}

This immediately gives us the following corollary, where $\lambda(f)$ is the \emph{growth rate} of $f$, defined in the introduction.

\begin{corollary}\label{cor:growth}
    Let $f\colon L\rightarrow L$ be spA.  Then \[\lambda(f)=\inf_{g\simeq f}\lambda(g), \] where the infimum is taken over all maps $g$ homotopic to $f$.
\end{corollary}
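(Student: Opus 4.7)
The statement follows from Theorem \ref{thm:fewer} by an essentially formal argument, so the plan is short. The easy direction, $\inf_{g \simeq f} \lambda(g) \leq \lambda(f)$, is obtained by taking $g = f$ in the infimum, since $f \simeq f$.

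For the reverse inequality, fix an arbitrary $g \simeq f$ and let $N$ be the threshold produced by Theorem \ref{thm:fewer}. For every integer $n > N$, that theorem exhibits an injection from points of period $n$ of $f$ to points of period $n$ of $g$, giving the pointwise bound $\#\text{Fix}(g^n) \geq \#\text{Fix}(f^n)$. Taking $n$-th roots preserves this inequality, and the $\limsup$ as $n \to \infty$ is insensitive to the behavior on the finite set $\{n \leq N\}$, so
\[\lambda(g) = \limsup_{n \to \infty} \sqrt[n]{\#\text{Fix}(g^n)} \geq \limsup_{n \to \infty} \sqrt[n]{\#\text{Fix}(f^n)} = \lambda(f).\]
Taking the infimum over $g \simeq f$ yields $\inf_{g \simeq f} \lambda(g) \geq \lambda(f)$, which together with the easy direction gives the desired equality.

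There is no real obstacle here: the only point worth noting is that the quantity controlled in Theorem \ref{thm:fewer} is precisely $\#\text{Fix}(\cdot^n)$, which is the quantity used in the definition of $\lambda$, so no additional bookkeeping (for instance, distinguishing points of minimal period $n$ from points fixed by the $n$-th iterate) is required.
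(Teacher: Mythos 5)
Your argument is the right one, and it matches what the paper intends — the paper states the corollary with no separate proof, calling it immediate from Theorem~\ref{thm:fewer}. Both directions of your write-up (taking $g=f$ for one inequality, and using the pointwise bound for $n>N$ together with the insensitivity of $\limsup$ to finitely many terms for the other) are the expected ones.

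The one place to be careful is your closing remark. Reading the proof of Theorem~\ref{thm:fewer} (``Let $N$ be the highest period of such a point,'' ``show $\widetilde{q}$ has period exactly $n$''), the quantity directly controlled is the number of points of \emph{minimal} period $n$, not $\#\text{Fix}(\cdot^n)$ itself. This distinction matters in principle because $\text{Fix}(f^n)$ for $n>N$ can still contain points of minimal period $d\leq N$ with $d\mid n$, and those may have escaping half leaves, so the injection in Theorem~\ref{thm:fewer} is not claimed for them. The patch is short: writing $a_d$ (resp.\ $b_d$) for the count of minimal-period-$d$ points of $f$ (resp.\ $g$), the theorem gives $b_d\geq a_d$ for $d>N$, hence $\#\text{Fix}(g^n)=\sum_{d\mid n}b_d\geq \#\text{Fix}(f^n)-\sum_{d\leq N}a_d$. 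The subtracted term is a constant independent of $n$, so it washes out under $\sqrt[n]{\cdot}$ and $\limsup$ because $\#\text{Fix}(f^n)$ grows exponentially for a spA map, and one still gets $\lambda(g)\geq\lambda(f)$. So the corollary holds exactly as you conclude, but the parenthetical claim that no bookkeeping on the period convention is needed is the one spot you should tighten.
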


\section{The Handel--Miller case}

\subsection{Handel--Miller theory}\label{subsec:handelmiller}
Let $g\sm{L}$ be an atoroidal endperiodic map with positive and negative escaping sets $\mathcal{U}_+$ and $\mathcal{U}_-$. A $g$-\emph{cycle of ends} is a set \[\{\mathcal{E},g(\mathcal{E}),g^2(\mathcal{E}),\ldots\}\] for $\mathcal{E}$ an end of $L$. Each $g$-cycle of ends is finite, and there are finitely many of them. Let $A_1,\ldots,A_k$ be the set of $g$-cycles of ends, and pick a representative end $\mathcal{E}_i$ in each $A_i$. Fix a single-component tightened $g$-juncture for each $\mathcal{E}_i$ as in Section \ref{subsec:endperiodic}. Let $\mathcal{J}$ be the set of geodesic tightenings of the $g$-orbits of these $g$-junctures. Let $\mathcal{J}_+$ be the set of positive $g$-junctures in $\mathcal{J}$ and $\mathcal{J}_-$ the set of negative $g$-junctures in $\mathcal{J}$. We say a component $\gamma$ of $\mathcal{J}$ \emph{escapes} if the geodesic tightenings of the $g$-orbit of $\gamma$ escape compact sets. Let $\mathcal{X}_\pm$ be the set of nonescaping components of $\mathcal{J}_\pm$. Then we define the \emph{positive} and \emph{negative Handel--Miller laminations} by $\Lambda^+_{HM}=\overline{\mathcal{X}_-}\setminus\mathcal{X}_-$ and $\Lambda^-_{HM}=\overline{\mathcal{X}}_+\setminus\mathcal{X}_+$. For more details on this construction, see Section 4.3 of \cite{cantwell2021endperiodic}.

A \emph{positive principal region} is a connected component of $\mathcal{P}_+=L\setminus\overline{\mathcal{U}}_-$, and a \emph{negative principal region} is a connected component of $\mathcal{P}_-=L\setminus \overline{\mathcal{U}}_+$. By work of Cantwell, Conlon, and Fenley, there are finitely many principal regions, and each is the interior of a finite sided ideal polygon (Theorem 6.5 of \cite{cantwell2021endperiodic}). A \emph{Handel--Miller map} is a homeomorphism $h\sm{L}$ such that $h$ is isotopic to $g$ and $h$ preserves $\Lambda^+_{HM}$, $\Lambda^-_{HM}$, and a choice of tightened $g$-junctures for each end of $L$. Given a Handel--Miller map $h$, we have $\Lambda^+_{HM}=\partial\mathcal{U}_-$ and $\Lambda^-_{HM}=\partial\mathcal{U}_+$. Every atoroidal endperiodic $g\sm{L}$ has a Handel--Miller representative by Theorem 4.54 of \cite{cantwell2021endperiodic}.

Fixing such a Handel--Miller map $h$, call a half leaf $\ell$ of $\Lambda^\pm_{HM}$ based at a point $q$ \emph{periodic} if $q$ is a periodic point of $h$. We say $\ell$ \emph{escapes an end} $\mathcal{E}$ of $L$ if $\ell\cup(L\setminus U_\mathcal{E})$ is compact for every neighborhood $U_\mathcal{E}$ of $\mathcal{E}$. Otherwise, we say $\ell$ is \emph{recurrent}. If $\ell$ is recurrent, it intersects some closed geodesic $\alpha\subset L$ infinitely often. All but finitely many periodic half leaves are recurrent (see Theorem 6.27 of \cite{cantwell2021endperiodic}). If a point $z\in\hb$ is an endpoint of a lift $\ell$ of a half leaf $\ell\subset \Lambda^\pm_{HM}$, we say $z$ is \emph{recurrent} whenever $\ell$ is. 

We will need the following proposition from \cite{endperiodic}.

\begin{prop}\label{prop:8.2}(Proposition 8.2 of \cite{endperiodic}) Let $\widetilde{h}\sm{\widetilde{L}}$ be a lift of $h$ such that $\widetilde{h}^k$ fixes a point $\widetilde{q}\in\widetilde{L}$ for some $k$. Then either $\widetilde{q}$ is in the closure of a lift of a principal region, or $\widetilde{q}$ is the unique fixed point of $\widetilde{h}^k$ in $\widetilde{L}$, and $\widetilde{q}=\widetilde{\ell^+}\cap\widetilde{\ell^-}$ for $\widetilde{\ell^\pm}$ the unique leaves of $\widetilde{\Lambda^\pm}_{HM}$ through $\widetilde{q}$. In the latter case, for some $p$, $\partial\ell^\pm$ are exactly the fixed points of $\widetilde{h}^{kp}$ on $\hb$.   
\end{prop}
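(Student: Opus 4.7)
The plan is to generalize the argument for spun pseudo-Anosov maps (Theorem 4.1 of \cite{endperiodic}) to the Handel--Miller setting, exploiting the analogous structure of invariant transverse laminations. First I would reduce to the case $k=1$ by replacing $h$ with $h^k$; the conclusion about $\widetilde{h}^{kp}$ on $\hb$ then becomes a statement about powers of the new $\widetilde{h}$. If $\widetilde{q}$ lies in the closure of a lift of a principal region, we are in the first alternative and done, so assume not.

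Since the positive and negative principal regions are the components of $L\setminus\overline{\mathcal{U}}_-$ and $L\setminus\overline{\mathcal{U}}_+$ respectively, and $\Lambda^\pm_{HM}=\partial\mathcal{U}_\mp$, any point of $L$ not in the closure of any principal region must lie on leaves of both $\Lambda^+_{HM}$ and $\Lambda^-_{HM}$. By transversality of the two laminations, exactly one leaf $\ell^\pm\subset\Lambda^\pm_{HM}$ passes through $\pi(\widetilde{q})$; let $\widetilde{\ell^\pm}$ be the lifts through $\widetilde{q}$. Because $h$ preserves each Handel--Miller lamination setwise, $\widetilde{h}$ sends leaves of $\widetilde{\Lambda^\pm}_{HM}$ to leaves of the same lamination, and since $\widetilde{h}(\widetilde{q})=\widetilde{q}$, uniqueness of the leaf through $\widetilde{q}$ forces $\widetilde{h}(\widetilde{\ell^\pm})=\widetilde{\ell^\pm}$ setwise.

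For uniqueness of $\widetilde{q}$ in $\widetilde{L}$, suppose for contradiction that $\widetilde{q}'\neq\widetilde{q}$ is another fixed point not in a lifted principal region. The same reasoning produces preserved leaves $\widetilde{\ell^\pm{}'}$ through $\widetilde{q}'$. Half-leaves of the lifted Handel--Miller laminations are uniformly quasigeodesic (as in the setup for Section \ref{subsec:recurrent}), so each has well-defined endpoints on $\hb$, and distinct leaves give distinct endpoint pairs. Thus $\widetilde{h}\mid_{\hb}$ would fix the endpoints of both $\widetilde{\ell^\pm}$ and $\widetilde{\ell^\pm{}'}$, producing more alternating attractor/repeller pairs than a single multi sink-source configuration permits. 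I would then argue, as in the spA case, that the only way to reconcile this rigidity is for $\widetilde{h}$ to commute with the nontrivial deck transformation relating the two configurations, contradicting Lemma \ref{lem:commute}. This rigidity step is where I expect most of the work to lie, as it requires carefully adapting the spA multi sink-source machinery to the less regular Handel--Miller setting.

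Finally, for the claim that $\partial\widetilde{\ell^\pm}$ are exactly the fixed points of $\widetilde{h}^p$ on $\hb$ for some $p$: the four endpoints of $\widetilde{\ell^\pm}$ are periodic under $\widetilde{h}\mid_{\hb}$ because the leaves are preserved, hence fixed after passing to a suitable power. Conversely, any fixed point $z\in\hb$ of $\widetilde{h}^p$ should, by the Handel--Miller analogue of Theorem 4.1 of \cite{endperiodic}, arise as an endpoint of a preserved leaf of $\widetilde{\Lambda^\pm}_{HM}$ through a fixed point of $\widetilde{h}^p$ in $\widetilde{L}$; the uniqueness established above forces that fixed point to be $\widetilde{q}$, so $z\in\partial\widetilde{\ell^\pm}$. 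Taking $p$ large enough to absorb the finitely many escaping periodic half-leaves (compare Lemma \ref{lem:rec}) yields the desired equality.
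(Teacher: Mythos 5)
This proposition is simply cited in the paper (as Proposition 8.2 of \cite{endperiodic}) with no proof supplied, so there is no in-paper argument to compare against; I'll assess your proposal on its own terms. The main gap is in the uniqueness step. You claim that $\widetilde{h}$ fixing the endpoints of the leaves through $\widetilde{q}$ and through a second fixed point $\widetilde{q}'$ would produce ``more alternating attractor/repeller pairs than a single multi sink-source configuration permits,'' but there is no such bound: multi sink-source dynamics is defined as having \emph{at least} four alternating fixed points on $\hb$, so eight alternating fixed points is perfectly consistent with the definition and yields no contradiction. Your fallback --- that $\widetilde{h}$ must then commute with a deck transformation relating the two configurations, contradicting Lemma~\ref{lem:commute} --- does not apply either, since $\widetilde{q}$ and $\widetilde{q}'$ need not project to the same point of $L$, so there is no deck transformation taking one fixed point to the other. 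What actually forces uniqueness is the expansion/contraction of the Handel--Miller map along its stable and unstable laminations, which (as in Lemma 3.1 of \cite{handelthurston} in the pseudo-Anosov case) precludes two distinct fixed points of a single lift; a quadrant/index argument using the lamination structure near $\widetilde{q}$ is the standard route, not a constraint on the count of boundary fixed points. Note also that the statement asserts $\widetilde{q}$ is the unique fixed point of $\widetilde{h}^k$ in \emph{all} of $\widetilde{L}$, so you must additionally rule out a second fixed point lying in the closure of a lifted principal region; your contradiction hypothesis only addresses the other case.

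Two smaller issues. First, the deduction that $\widetilde{q}$ lies on a leaf of each lamination requires that $\widetilde{q}$ is periodic: an arbitrary point avoiding the closures of all principal regions lies in $\mathrm{int}(\overline{\mathcal{U}}_-)\cap\mathrm{int}(\overline{\mathcal{U}}_+)$, which can include points of $\mathcal{U}_\pm$ off both laminations; you need the observation that a periodic point cannot lie in the escaping sets $\mathcal{U}_\pm$ since those escape under iteration. Second, the converse direction of your last paragraph appeals to a ``Handel--Miller analogue of Theorem 4.1 of \cite{endperiodic},'' which is essentially the content of the statement being proved, so as written that step is circular and needs an independent argument.
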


In the second case, $\widetilde{h}^{kp}$ acts on $\hb$ with multi sink-source dynamics by Lemma 4.9 of \cite{endperiodic}.

\subsection{The proof of Theorem C}

Throughout, let $h\sm{L}$ be an atoroidal Handel--Miller map. To prove Theorem C, we need to modify Lemmas \ref{lem:nbd}, \ref{lem:K}, and \ref{lem:index} to apply in this setting. Lemma \ref{lem:nbd} holds without modification in the proof for points $z\in\hb$ that are recurrent with respect to the Handel--Miller laminations. Lemma \ref{lem:K} holds when we replace the spA map $f$ with the atoroidal Handel--Miller map $h$. For Lemma \ref{lem:index}, we need to add one hypothesis.

\begin{lemma}\label{hindex}
  Let $q$ be a point of period $n$ of $h$ with no escaping half leaves, let $\widetilde{h}$ be a lift of $h$ to $\Hyp^2$ and $\widetilde{q}$ a compatible lift of $q$. Assume further that $\widetilde{q}$ is the unique fixed point of $\widetilde{h}^n$ in $\Hyp^2$. Suppose $g\simeq h$, and let $\widetilde{g}$ be a compatible lift of $g$. If $\widetilde{g}$ has finitely many fixed points, and $z\in\partial_\infty\Hyp^2$ is fixed by $\widetilde{h}$, then $I(\widetilde{g},z)=\frac{1}{2}$.   
\end{lemma}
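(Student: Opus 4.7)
The plan is to follow the proof of Lemma \ref{lem:index} closely, with Proposition \ref{prop:8.2} supplying the multi sink-source dynamics that were a direct hypothesis there. Because $\widetilde{q}$ is by hypothesis the unique fixed point of $\widetilde{h}^n$ in $\Hyp^2$, Proposition \ref{prop:8.2} places us in its second case: for some $p \geq 1$, the fixed points of $\widetilde{h}^{np}$ on $\hb$ are exactly the endpoints $\partial\widetilde{\ell^\pm}$ of the unique leaves $\widetilde{\ell^\pm}$ of $\widetilde{\Lambda^\pm}_{HM}$ through $\widetilde{q}$, and by Lemma 4.9 of \cite{endperiodic} the map $\widetilde{h}^{np}$ acts on $\hb$ with multi sink-source dynamics. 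Since $z$ is fixed by $\widetilde{h}$, it is also fixed by $\widetilde{h}^{np}$, so $z \in \partial\widetilde{\ell^\pm}$; without loss of generality $z$ is a sink of $\widetilde{h}^{np}$. Because $\widetilde{h}$ is orientation-preserving on $\hb$ and $z$ is isolated in the fixed-point set of $\widetilde{h}^{np}$, the sign of $\widetilde{h}(x) - x$ on either side of $z$ in a local coordinate is preserved under iteration of $\widetilde{h}$, so $\widetilde{h}^{np}$ being a sink at $z$ forces $\widetilde{h}$ to be a sink at $z$ as well. Since $\widetilde{g}$ and $\widetilde{h}$ agree on $\hb$, the point $z$ is also a sink for $\widetilde{g}$ on $\hb$.

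Next, the supporting lemmas from Section \ref{subsec:recurrent} transfer essentially verbatim. Since $q$ has no escaping half leaves, the half leaves of $\Lambda^\pm_{HM}$ through $q$ are recurrent, so some such half leaf crosses a closed geodesic $\alpha \subseteq L$ infinitely often; this is all that Lemma \ref{lem:nbd} requires to produce a neighborhood basis $\{N_i\}_{i \in \N}$ of $z$ from lifts of $\alpha$. Lemma \ref{lem:K} was deduced from Lemma \ref{lem:commute} alone, and since $h$ is atoroidal Lemma \ref{lem:commute} applies to $h$, so for any $K > 0$ and compact subsurface $S \subseteq L$ the map $\widetilde{h}$ displaces every point of $N_i \cap \pi^{-1}(S)$ by more than $K$ whenever $i$ is large enough.

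With these in hand, the construction from the proof of Lemma \ref{lem:index} runs identically. Choose $U_1 = N_i$ containing no fixed point of $\widetilde{g}$ other than $z$ and with $\widetilde{g}(U_1 \cap \hb) \subseteq U_1 \cap \hb$; let $H$ be a compactly supported homotopy on $L$ from the identity to a map sending $g(\alpha)$ to its geodesic representative, with displacement bound $C$ and support $S$; then pick $U_2 = N_j$ with $j > i$ so that $\widetilde{g}(U_2) \subseteq U_1$ and $\widetilde{g}$ displaces each point of $U_2 \cap \pi^{-1}(S)$ by more than $3C$. Lift $H$ to a compactly supported homotopy $\widetilde{H}$ on $\overline{\Hyp^2}$ and set $g' = \widetilde{H}_1 \circ \widetilde{g}$; the $3C$ versus $C$ displacement bounds keep $\widetilde{H}_t \circ \widetilde{g}$ free of fixed points in $U_2 \setminus \{z\}$ for every $t \in [0,1]$, so $I(\widetilde{g}, z) = I(g', z)$. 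By construction $g'(U_2) \subsetneq U_2$ with $z$ a sink for $g'$ on all of $\overline{\Hyp^2}$, yielding $I(g', z) = \frac{1}{2}$. The main obstacle is the first step: the hypothesis only asserts $\widetilde{h}(z) = z$, and promoting the sink behavior from a power $\widetilde{h}^{np}$ to $\widetilde{h}$ itself is where the uniqueness assumption on $\widetilde{q}$ becomes essential, via Proposition \ref{prop:8.2}.
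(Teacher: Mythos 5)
Your proof is correct and follows the same strategy as the paper: invoke Proposition~\ref{prop:8.2} (together with Lemma~4.9 of \cite{endperiodic}) to get multi sink-source dynamics, and then rerun the proof of Lemma~\ref{lem:index} using the Handel--Miller analogues of Lemmas~\ref{lem:nbd} and~\ref{lem:K}. Your explicit promotion of the sink behavior from $\widetilde{h}^{np}$ to $\widetilde{h}$ itself (via sign preservation of $\widetilde{h}(x)-x$ under iteration near the isolated boundary fixed point $z$) is a worthwhile detail, since Proposition~\ref{prop:8.2} literally gives multi sink-source dynamics only for the power $\widetilde{h}^{np}$, a point the paper's two-line proof passes over.
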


\begin{proof}
    We have that $\widetilde{h}$ acts on $\hb$ with multi sink-source dynamics by Proposition \ref{prop:8.2}. The rest of the proof follows exactly as in the proof of Lemma \ref{lem:index}.
\end{proof}

\begin{theorem}\label{thm:hm}
     Let $h\sm{L}$ be an atoroidal Handel--Miller map with stable and unstable Handel--Miller laminations $\Lambda^+_{HM}$ and $\Lambda^-_{HM}$. For some $N\in\N$, every $g\simeq h$ has no fewer points of period $n$ than the core dynamical system of $h$ for all $n>N$.
\end{theorem}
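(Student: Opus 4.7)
The plan is to mirror the proof of Theorem \ref{thm:fewer}, restricting attention to periodic points in $\mathcal{K} = \Lambda^+_{HM} \cap \Lambda^-_{HM}$ and substituting Lemma \ref{hindex} for Lemma \ref{lem:index}. First I would invoke the Handel--Miller analog of Lemma \ref{lem:rec}, namely Theorem 6.27 of \cite{cantwell2021endperiodic}, which says that all but finitely many periodic half leaves of $\Lambda^\pm_{HM}$ are recurrent. Consequently only finitely many periodic points of $h$ in $\mathcal{K}$ have an escaping half leaf, and I can define $N$ to be the largest period of such a point.

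Next, for each period-$n$ point $q \in \mathcal{K}$ of $h$ with $n > N$, I would produce a Nielsen-equivalent period-$n$ point of $g$ by the same index and Lefschetz--Hopf strategy used in Theorem \ref{thm:fewer}. Pick a lift $\widetilde{h}\sm{\overline{\Hyp^2}}$ with compatible lifts $\widetilde{q}$ of $q$ and $\widetilde{g}$ of $g$, so that $\widetilde{h}$ and $\widetilde{g}$ agree on $\hb$. Since $q \in \mathcal{K}$ and its half leaves are recurrent by choice of $N$, Proposition \ref{prop:8.2} places us in its second case: $\widetilde{q}$ is the unique fixed point of $\widetilde{h}^n$ in $\widetilde{L}$, and for some $p \geq 1$ the map $\widetilde{h}^{np}$ acts on $\hb$ with multi sink-source dynamics. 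As in the proof of Theorem \ref{thm:fewer}, I would split on $p$. If $p > 1$, then $\widetilde{h}^n$, and hence $\widetilde{g}^n$, has no fixed points on $\hb$, and applying the Lefschetz--Hopf theorem to the closed disk $\overline{\Hyp^2}$ forces a fixed point of $\widetilde{g}^n$ in $\Hyp^2$. If $p = 1$, then $\widetilde{h}^n$ already acts with multi sink-source dynamics, so Lemma \ref{hindex} (applied to $\widetilde{h}^n$ and $\widetilde{g}^n$, whose uniqueness hypothesis is verified by Proposition \ref{prop:8.2}) shows that every fixed point of $\widetilde{g}^n$ on $\hb$ has index $\frac{1}{2}$. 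Since $\chi(S^2) = 2$, the double of $\widetilde{g}^n$ must then have a fixed point of negative index, again yielding a fixed point of $\widetilde{g}^n$ in $\Hyp^2$. The argument from Theorem 14.20 of \cite{primer} verifies that the resulting periodic point of $g$ has period exactly $n$.

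Finally, to conclude that the resulting assignment from period-$n$ points of $h$ in $\mathcal{K}$ to period-$n$ points of $g$ is injective, note that if two distinct $q_1, q_2 \in \mathcal{K}$ of period $n$ were Nielsen equivalent, there would exist distinct lifts $\widetilde{q}_1, \widetilde{q}_2$ both fixed by the same lift of $h^n$, contradicting the uniqueness clause of Proposition \ref{prop:8.2}. So the map on periodic point sets is injective and the theorem follows.

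The main obstacle is ensuring that the hypothesis of Lemma \ref{hindex} holds, namely that $\widetilde{q}$ is the unique fixed point of $\widetilde{h}^n$ in $\widetilde{L}$. This is precisely why the theorem restricts to the core dynamical system $h|_\mathcal{K}$ rather than all of $L$: for periodic points lying in the closure of a lift of a principal region, the first alternative of Proposition \ref{prop:8.2} applies and uniqueness can fail, so the multi sink-source picture on $\hb$ need not be present and the Lefschetz--Hopf argument cannot be used to force a Nielsen-equivalent fixed point of $\widetilde{g}^n$.
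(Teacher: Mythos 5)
Your proposal follows the same overall strategy as the paper -- reuse the Lefschetz--Hopf argument from Theorem~\ref{thm:fewer} with Lemma~\ref{hindex} in place of Lemma~\ref{lem:index}, and make the restriction to $\mathcal{K}$ so that Proposition~\ref{prop:8.2} can be applied. However, there is a gap in the step where you claim ``Since $q\in\mathcal{K}$ and its half leaves are recurrent by choice of $N$, Proposition~\ref{prop:8.2} places us in its second case.'' Proposition~\ref{prop:8.2} is a disjunction: either $\widetilde{q}$ lies in the closure of a lift of a principal region, or $\widetilde{q}$ is the unique fixed point of $\widetilde{h}^n$. Nothing in the statement of the proposition ties these two alternatives to whether the half leaves at $q$ are recurrent, so recurrence alone does not let you rule out the first alternative.

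The paper handles this point differently: it observes that if uniqueness fails then $\widetilde{q}$ lies in the closure of a lift of a principal region, and, since $q\in\mathcal{K}$, it must sit at a vertex of the polygonal intersection of a positive and a negative principal region closure as in Figure~\ref{fig:principal}. Because there are only finitely many principal regions, each with finitely many boundary leaves, there are only finitely many such vertices, and hence only finitely many such $q$. The constant $N$ in the paper is then taken to dominate the periods of \emph{both} these finitely many ``principal-region'' points \emph{and} the finitely many points with escaping half leaves. Your choice of $N$ only accounts for the second finite set. To repair the argument you would either need to enlarge $N$ to also exceed the periods of periodic points in $\mathcal{K}$ lying at such vertices, or else prove (with a citation to Handel--Miller theory) that every periodic point at such a vertex automatically has an escaping half leaf and so is already excluded by your $N$. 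As written, the claim is asserted rather than established. The rest of your argument -- the split on the power $p$ in the multi sink-source dynamics, the application of Lefschetz--Hopf to the double of $\overline{\Hyp^2}$, the invocation of the Theorem 14.20 argument for the exact period, and the injectivity observation via the uniqueness clause of Proposition~\ref{prop:8.2} -- all match the paper's proof.
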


    \begin{figure}[h]
        \centering
        \includegraphics[width=0.4\linewidth]{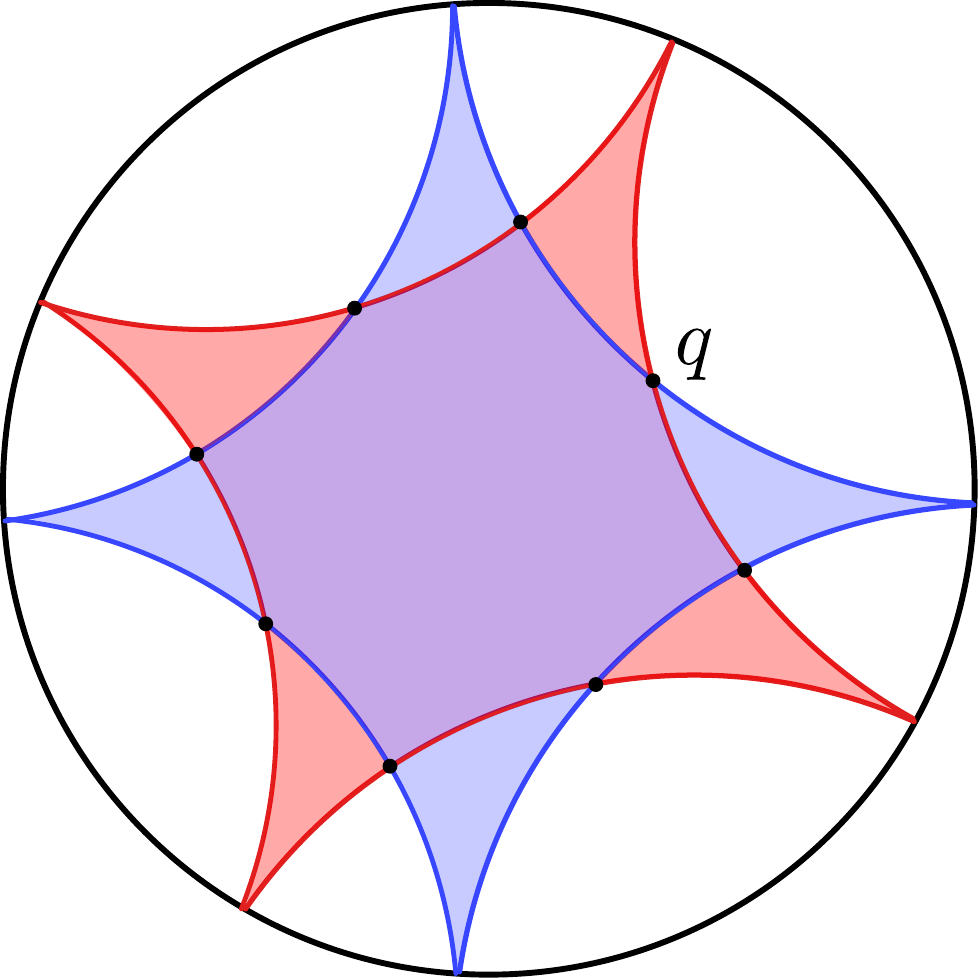}
        \caption{The red region is the closure of a lift of a positive principal region, and the blue region is the closure of a lift of a negative principal region. The point $q$ must lie on one of the vertices, shown in black, of the polygonal intersection of these two regions.}
        \label{fig:principal}
    \end{figure}

\begin{proof}
    Our main claim is that there exists some $N\in\N$ such that if $q\in\Lambda^+_{HM}\cap\Lambda^-_{HM}$ is a periodic point of $h$ of period $n>N$ contained in leaves $\ell\pm\subset\Lambda^\pm_{HM}$, then for some lift $\widetilde{q}$ of $q$ fixed by $\widetilde{h}^n$, where $\widetilde{h}$ is a lift of $h$, we have that $\widetilde{q}$ is the unique fixed point in $\widetilde{L}$ of $\widetilde{h}^n$, and at least one of $\ell^+$and $\ell^-$ is recurrent. If $\widetilde{q}$ is not the unique fixed point of $\widetilde{h}^n$, then $\widetilde{q}$ lies in the closure of a lift of a principal region by Proposition \ref{prop:8.2}, and must be as in Figure \ref{fig:principal}. As there are only finitely many principal regions of $h$, each with only finitely many boundary leaves, there can only be finitely many such points $q$. Similarly, all but finitely many periodic half leaves are recurrent. Thus, we can let $N$ be the maximum period among periodic points that lie on the intersection of boundary leaves of principal regions and each of whose leaves are escaping.

   The rest of the proof follows exactly the same as the proof of Theorem \ref{thm:fewer}, using our modified versions of Lemmas \ref{lem:nbd}, \ref{lem:K}, and \ref{lem:index}. In using the argument from Theorem 14.20 of \cite{primer} to show that the fixed point of $\widetilde{g}^n$ has period exactly $n$, we use the hypothesis that $\widetilde{h}^n$ has a unique fixed point.
\end{proof}

\section{Sharpness of results}\label{sec:ex}
One might hope for versions of Theorems \ref{thm:fewer} and \ref{thm:hm} that hold for all periods and do not require the constant $N$. This is what we see in the case of pseudo-Anosovs on finite-type surfaces, but in fact it does not hold in our setting. We will demonstrate this by producing a homeomorphism $j$ without fixed points, homotopic to an endperiodic map, but whose spA and Handel--Miller representatives each have at least one fixed point. In particular, we will first construct an endperiodic homeomorphism $g$ whose spA and Handel--Miller representatives must have fixed points. We will then modify $g$ by an isotopy to get the map $j$, which is fixed point free.

\subsection{Constructing the endperiodic map $g$}\label{subsec:exg}

Let $C$ be the portion of $\R^2$ bounded by the hyperbolas $xy=1$ and $xy=-1$ and containing the origin. Consider the map \[A=\begin{pmatrix}
    2&0\\0&\frac{1}{2}
\end{pmatrix},\] which maps $C$ to itself.

\begin{figure}[ht]
    \centering
    \includegraphics[width=0.9\linewidth]{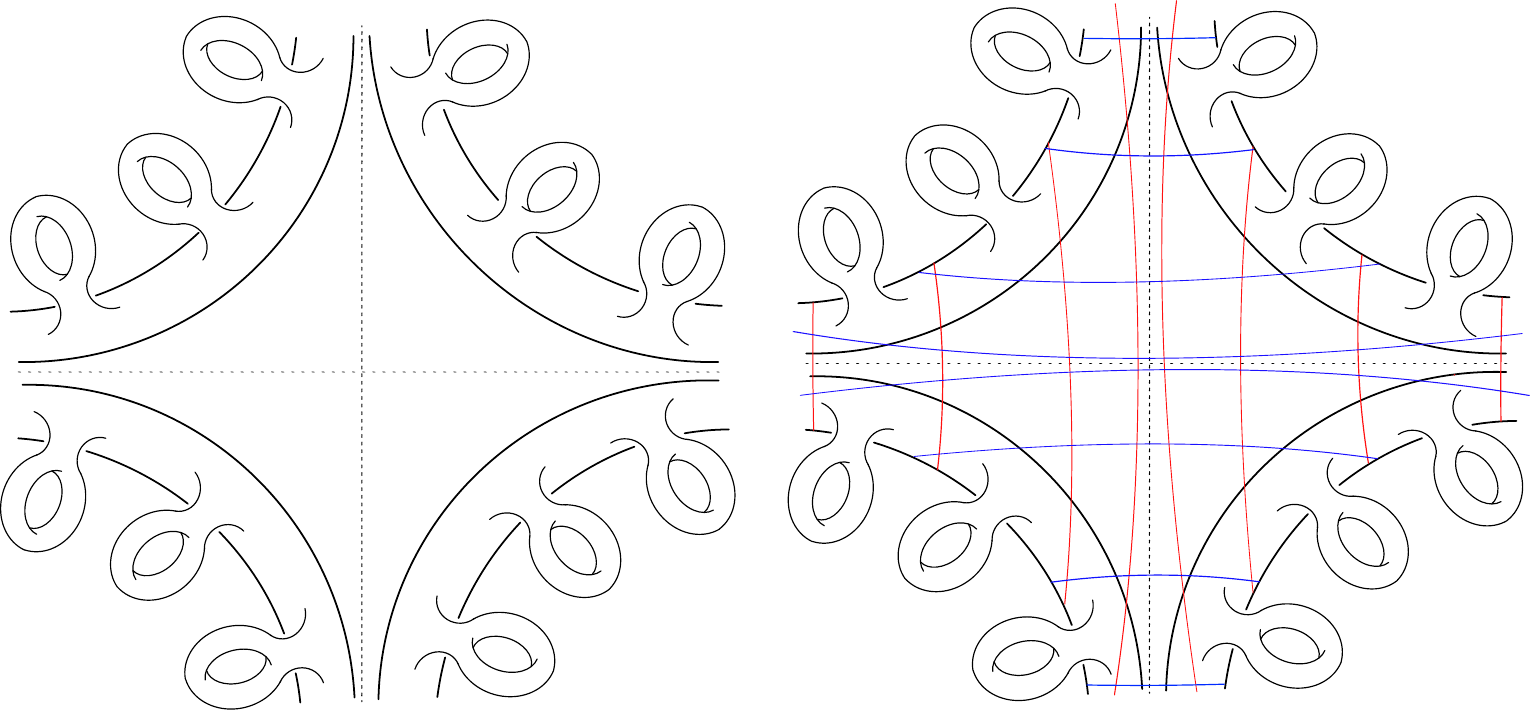}
    \caption{On the left is the surface $S$. On the right is a system of $g$-junctures on $L$. The arcs are drawn on $S$ and should be doubled to get closed curves on $L$. Positive $g$-junctures are drawn in red, and negative $g$-junctures are drawn in blue.}
    \label{fig:Swwofols}
\end{figure}

Attach constant-width infinite strips to each of the four sides of $C$, and extend the action of $A$ across the strips so that they move following the boundary of $C$, and call the extended map $A'$. Choose a nonempty $A'$-invariant set of disks in each of the four strips, and attach handles to these disks to create a new surface $S$, as illustrated in Figure \ref{fig:Swwofols}. Now we can define a map $\hat{A}\sm{S}$ that acts as $A'$ off the handles and just moves the handles with the rest of the surface. 

Note that the doubled surface $DS$ is a boundaryless infinite-type surface with four ends, each of which is accumulated by genus. Let $L=DS$, and let $g=D\hat{A}$, the doubled map. Then since $g\sm{L}$ is endperiodic, it has some spA representative and some Handel--Miller representative.

\subsection{The spA and Handel--Miller representatives of $g$ have fixed points.}\label{subsec:exfh}
Now we will show that any spA or Handel--Miller representative of $g$ has at least one fixed point. Consider the system of $g$-junctures shown in Figure \ref{fig:Swwofols}. Iterating these under positive and negative powers of $g$ shows that the positive Handel--Miller lamination $\Lambda_{HM}^+$ is the union of the two copies of the $x$-axis, and $\Lambda_{HM}^-$ is the union of the two copies of the $y$-axis. Since each lamination contains two leaves, label them so that $\Lambda_{HM}^+=\{\lambda_1^+,\lambda_2^+\}$, $\Lambda_{HM}^-=\{\lambda_1^-,\lambda_2^-\}$, and $\lambda_i^+\cap\lambda_i^-\neq\varnothing$ for $i=1,2$. Let $h$ be a Handel--Miller representative of $g$. Then $h$ must fix the two points $0_1=\lambda_1^+\cap\lambda_1^-$ and $0_2=\lambda_2^+\cap\lambda_2^-$.

Let $\widetilde{h}$ be a lift of $h$ to $\widetilde{L}\cong\overline{\Hyp^2}$ fixing lifts $\widetilde{\lambda_1^+}$ and $\widetilde{\lambda_1^-}$ of $\lambda_1^+$ and $\lambda_1^-$. By Proposition 6.9 of \cite{cantwell2021endperiodic}, $\widetilde{h}$ acts on $\hb$ with multi sink-source dynamics, where the fixed points are exactly the endpoints of $\widetilde{\lambda_1^+}$ and $\widetilde{\lambda_1^-}$. Let $f$ be a spA representative of $g$, and let $\widetilde{f}$ be the lift of $f$ to $\overline{\Hyp^2}$ such that $\widetilde{f}$ agrees with $\widetilde{h}$ on $\hb$. Then since $\widetilde{f}$ acts on $\hb$ with multi sink-source dynamics, we have by Theorem 4.1 of \cite{endperiodic} that $\widetilde{f}$ has a fixed point in $\overline{\Hyp^2}$, and thus $f$ has at least one fixed point in $L$.

\subsection{Constructing the fixed point free map $j$}\label{subsec:exj}

Now we will modify $g$ by an isotopy to create a map $j\sm{L}$ with no fixed points. For simplicity, we will define the map only on $S$, and it should be doubled to get the map $j\sm{L}$.

\begin{figure}[ht]
    \centering
    \includegraphics[width=0.5\linewidth]{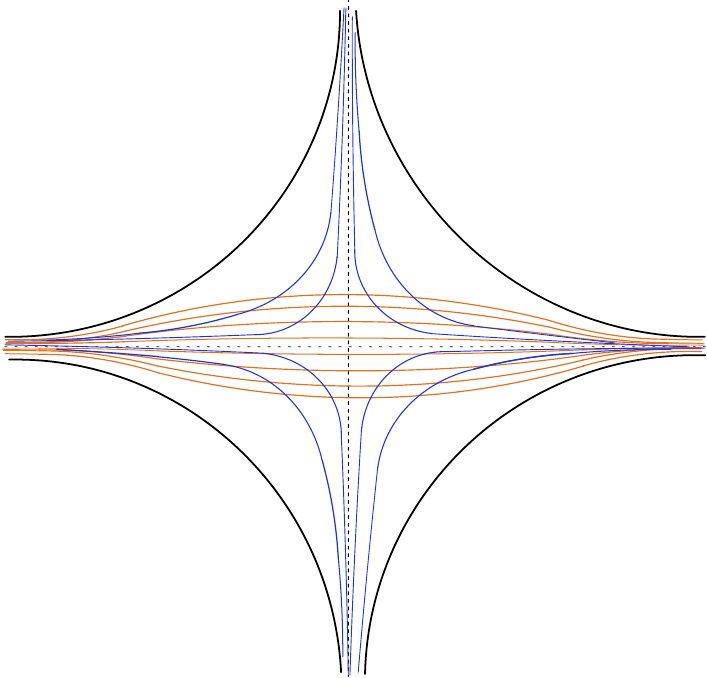}
    \caption{The region $R$ in $C$ is the region foliated in orange. The singular foliation $\mathcal{F}$ of $C$ by hyperbolas of the form $xy=a$ for $a\in[-1,1]$ is shown in blue.}
    \label{fig:R}
\end{figure}

Let $R$ be the subregion of $C$ shown in Figure \ref{fig:R}. Let $\mathcal{F}$ be the singular foliation of $C$ by hyperbolas of the form $xy=a$ for $a\in[-1,1]$, which is invariant under the action of $A$. As in Figure \ref{fig:R}, foliate $R$ by arcs such that each arc meets each leaf of $\mathcal{F}$ at most once (except on the $x$-axis, where the two foliations coincide). Parametrize $R$ as $\R\times[-1,1]$ such that each leaf is $\R\times \{t\}$ for some $t\in[-1,1]$, and a point $x_0$ on the $x$-axis corresponds with $(x_0,0)\in\R\times[-1,1]$. For $t\in[-1,1]$, define $F_t\sm{\R}$ by \[F_t(x)=(1-|t|)(x/2+1)+|t|x.\] Define $F\sm{R}$ by \[F(x,t)=(F_t(x),t).\] Now define the map $j$ first on $R$ by \[j(p)=F\circ g(p),\] and extend this map to the rest of $S$ by \[j(p)=g(p)\] for $p\in S\setminus R$. Note that by construction, $j\simeq g$.

We now show that $j$ has no fixed points. On $S\setminus R$, $g$ has no fixed points, and $j$ agrees with $g$ on this set. On $R$, note that the set of fixed points of $F$ is \[\text{Fix}(F)=\left\{(x,t)\in R\mid x=2\text{ or }|t|=1\right\}.\]

On $R\setminus (\text{Fix}(F)\cup x\text{-axis})$, $j$ moves points off of their leaf in $\mathcal{F}$, and thus can have no fixed points. On $\text{Fix}(F)$, $g$ has no fixed points, so $j$ has no fixed points. Finally, on the $x$-axis, $j(x,0)=(x+1,0)$, so we conclude that $j$ has no fixed points.

\bibliography{bibliography}
\bibliographystyle{alpha}

\end{document}